\documentclass[10pt, a4paper, reqno]{amsart}

\usepackage[utf8]{inputenc}
\usepackage[T1]{fontenc}
\usepackage{lmodern}
\usepackage[margin=2.5cm]{geometry}
\usepackage{microtype}

\usepackage{amsmath,amssymb,amsthm,mathtools}
\mathtoolsset{showonlyrefs}

\usepackage{braket}

\usepackage{tikz-cd}

\usepackage{xcolor}

\usepackage[colorlinks=true,
            linkcolor=blue,
            citecolor=red,
            urlcolor=blue,
            pdfborder={0 0 0},
            pdftitle={Splitting of Clifford groups associated to finite abelian groups},
            pdfauthor={Cesar Galindo},
            pdfkeywords={Clifford group, finite Heisenberg group, group extension, group cohomology, Weil representation}]{hyperref}

\newtheorem{theorem}{Theorem}[section]
\newtheorem{proposition}[theorem]{Proposition}
\newtheorem{lemma}[theorem]{Lemma}

\theoremstyle{definition}
\newtheorem{definition}[theorem]{Definition}

\theoremstyle{remark}
\newtheorem{remark}[theorem]{Remark}

\newcommand{\Z}{\mathbb{Z}}
\newcommand{\C}{\mathbb{C}}
\newcommand{\HH}{\mathcal{H}}
\newcommand{\cO}{\mathcal{O}}
\newcommand{\F}{\mathbb{F}}
\DeclareMathOperator{\Alt}{Alt}
\DeclareMathOperator{\Map}{Map}
\DeclareMathOperator{\Aut}{Aut}
\DeclareMathOperator{\Inn}{Inn}
\DeclareMathOperator{\Sp}{Sp}
\DeclareMathOperator{\Ps}{Ps}
\DeclareMathOperator{\SL}{SL}
\DeclareMathOperator{\Hom}{Hom}
\DeclareMathOperator{\Sym}{Sym}
\DeclareMathOperator{\id}{id}
\DeclareMathOperator{\Bil}{Bil}

\title{Splitting of Clifford groups associated to finite abelian groups}

\author{C\'esar Galindo}
\address{Departamento de Matem\'aticas, Universidad de los Andes, Carrera 1 No. 18A-12, Edificio H, Bogot\'a 111711, Colombia}
\email{cn.galindo1116@uniandes.edu.co}

\subjclass[2020]{Primary 20J06, 20C25; Secondary 81P68}
\keywords{Clifford group, finite Heisenberg group, group extension, group cohomology, Weil representation}

\date{}

\begin{document}

\begin{abstract}
The Clifford group associated with a finite abelian group fits into an
extension of the symplectic group by the underlying phase
space.  We prove that this extension splits as a semidirect product if
and only if the order of the underlying abelian group is not divisible
by four.  The obstruction
to splitting is controlled entirely by the 2-primary component and
vanishes precisely when this component is trivial or cyclic of order
two.  This confirms a conjecture of Korbelář and Tolar, extending
their cyclic result to arbitrary finite abelian groups.
\end{abstract}

\maketitle


\section{Introduction}

Clifford operators occur in the stabilizer formalism for quantum
computation and quantum error correction, including constructions of
fault-tolerant gates and stabilizer
codes~\cite{Gottesman1997,Gottesman1998}.  Their role in efficient
simulation is expressed by the Gottesman--Knill
theorem~\cite{AaronsonGottesman2004}, and they are also used in
magic-state distillation~\cite{BravyiKitaev2005}.  Clifford twirling
and unitary $2$-designs are used for fidelity
estimation~\cite{Dankert2009}, while randomized benchmarking is used
to characterize quantum gates~\cite{Knill2008,Magesan2011}.

Finite Clifford groups also arise from irreducible representations of
finite Heisenberg groups.  By the Stone--von Neumann theorem,
symplectic automorphisms act projectively on such a representation,
giving the finite Weil
representation~\cite{Mackey1949,Mackey1958,Prasad2011,Weil1964}.  We
consider the following question for an arbitrary finite abelian group:
when does the projective Clifford group split over the symplectic group
as a semidirect product?

We write $\Z_N\coloneqq\Z/N\Z$.  While the literature often focuses
on systems of~$n$ qubits ($A=(\Z_2)^n$) or on a single qudit of
dimension~$N$ ($A=\Z_N$),
the construction applies to any finite abelian group~$A$;
see also~\cite{Hostens2005,MosesEtAl2026} for Clifford operations in
arbitrary and mixed dimensions.  Metaplectic operators and Weil
representations for finite abelian groups are studied further in
\cite{DuttaPrasad2015,KaiblingerNeuhauser2009}.  The construction uses
the \emph{double}
\[
V_A = A \oplus \widehat{A},
\qquad
\widehat{A}=\Hom(A,U(1)),
\]
which carries the non-degenerate alternating bicharacter
\[
\omega_A\big((a,\chi),(b,\psi)\big)
  = \chi(b)\,\psi(a)^{-1},
\]
making $(V_A,\omega_A)$ a finite symplectic abelian group.  We write
$\Sp(V_A)$ for the group of automorphisms of~$V_A$
preserving~$\omega_A$.  The Heisenberg group~$H(A)$ is the central
extension of~$V_A$ by the circle group determined by the evaluation
pairing, and the projective Clifford group~$C(A)$ is the normalizer
of~$H(A)$ modulo global phases.  Throughout, ``Clifford group'' means
this projective quotient unless explicitly stated otherwise.

After fixing an isomorphism
\[
A\cong \Z_{d_1}\oplus\cdots\oplus\Z_{d_r},
\]
we obtain a tensor-product realization
\[
\C[A]\cong
\C[\Z_{d_1}]\otimes\cdots\otimes\C[\Z_{d_r}]
\cong \C^{d_1}\otimes\cdots\otimes\C^{d_r}.
\]
More explicitly, write $a=(a_1,\dots,a_r)$ and
$\chi=(\chi_1,\dots,\chi_r)$ under the induced decompositions of
$A$ and $\widehat A$.  The global shift and phase operators are
\[
X_a=X_{a_1}\otimes\cdots\otimes X_{a_r},
\qquad
Z_\chi=Z_{\chi_1}\otimes\cdots\otimes Z_{\chi_r},
\]
where, on the $j$-th tensor factor,
$X_{a_j}\ket{b_j}=\ket{a_j+b_j}$ and
$Z_{\chi_j}\ket{b_j}=\chi_j(b_j)\ket{b_j}$; see
Subsection~\ref{subsec:heisenberg-extension} for the definitions of
these operators on $\C[A]$.  Thus $H(A)$ is the Pauli--Heisenberg group of a multipartite system
with local dimensions $d_1,\dots,d_r$, while $C(A)$ is its full
projective Clifford normalizer.  Relative to this tensor decomposition,
$C(A)$ may contain nonlocal Clifford operators as well as tensor
products of local ones.

Conjugation by Clifford operators preserves~$H(A)$ and induces
symplectic automorphisms of~$V_A$.  Conjugation therefore yields the
exact sequence
\begin{equation}\label{eq:extension}
1 \longrightarrow V_A \longrightarrow C(A)
  \longrightarrow \Sp(V_A) \longrightarrow 1.
\end{equation}
Splitting of~\eqref{eq:extension} is detected by its class in
$H^2(\Sp(V_A),V_A)$, which vanishes exactly in the split case.  When
the extension splits, the
symplectic group embeds as a subgroup of the Clifford group via a
group homomorphism $\sigma\colon \Sp(V_A)\to C(A)$, providing a
coherent choice of representative for each symplectic transformation
and yielding an explicit semidirect-product decomposition of every
Clifford operator into a symplectic part and a displacement.  When it
does not, no such global section exists: while individual Clifford
operators can still be described by their symplectic and phase
components, there is no way to choose these components
multiplicatively across the entire symplectic group.

In the cyclic case $A = \Z_N$, the odd and even cases behave quite
differently.  For odd~$N$, Appleby~\cite[Theorem~1]{Appleby2005} and
Gross~\cite[Theorem~3 and Lemma~4]{Gross2006} construct a
semidirect-product description of the projective Clifford group over
its symplectic quotient.  For even~$N$,
Korbe\-l\'a\v{r} and Tolar~\cite{KT23} proved that the extension
fails to split if and only if $4 \mid N$, using a presentation
of~$\SL(2,\Z_N)$ by generators and relations to derive incompatible
constraints on any hypothetical splitting homomorphism.  Motivated by
that result, and by its relation to the work of Bolt, Room,
and Wall~\cite{BoltRoomWall1961a,BoltRoomWall1961b} on Clifford
collineation groups, they conjectured that divisibility of $|A|$ by
$4$ should govern splitting for arbitrary finite abelian groups.  A
recent preprint by Korbel\'a\v{r} and Tolar proves the same splitting
criterion by means of explicit symplectic generators and
relations~\cite{KorbelarTolar2026}.

The purpose of this paper is to confirm this conjecture and to
determine exactly when the extension~\eqref{eq:extension} splits.
Our main result is the following theorem.

\begin{theorem}\label{thm:main}
Let $A$ be a finite abelian group.  The Clifford
extension~\eqref{eq:extension} splits as a semidirect product if and
only if $4 \nmid |A|$.
\end{theorem}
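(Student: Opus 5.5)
We reduce to primary components and then treat odd primes, the cyclic group of order two, and the remaining $2$-groups separately. Write $A=A_{\mathrm{odd}}\oplus A_2$, where $A_2$ is the $2$-primary part. Then $V_A=V_{A_{\mathrm{odd}}}\oplus V_{A_2}$ is an orthogonal decomposition for $\omega_A$. Every symplectic automorphism preserves the primary components, so $\Sp(V_A)=\Sp(V_{A_{\mathrm{odd}}})\times\Sp(V_{A_2})$. Under $\C[A]\cong\C[A_{\mathrm{odd}}]\otimes\C[A_2]$, the Heisenberg group and the Clifford group decompose compatibly. The plan is to show that the extension class in $H^2(\Sp(V_A),V_A)$ is the external sum of the classes for the two factors, via a Künneth-type argument. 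Here it helps that $V_{A_{\mathrm{odd}}}$ has odd order and $V_{A_2}$ has $2$-power order, so the cross terms can be handled by restriction to the factors. It then follows that the extension splits if and only if both primary extensions split. Concretely, a section for $A$ restricts to sections on each factor by projecting onto the tensor factor, and conversely tensor products of sections give a section.

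\emph{Odd part.} For $|A_{\mathrm{odd}}|$ odd, $2$ is invertible. Every $g\in\Sp(V)$ has a unique Clifford lift $U_g$ that is the canonical Weil operator: it is characterized by conjugating $W(v)$ to $W(gv)$, where $W(v)=\tau(\text{phase})X_aZ_\chi$ uses the symmetric Weyl ordering with $\frac12$. Uniqueness modulo phases of the lift commuting correctly shows that $g\mapsto U_g$ is a homomorphism to the projective group, generalizing Appleby and Gross.

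\emph{Order two.} For $A_2=\Z_2$, I would check by hand that $C(\Z_2)$ is $S_4$ and $\Sp=S_3$, which splits.

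\emph{Non-splitting.} If $4\mid|A|$, then $A_2$ contains either $\Z_{2^k}$ with $k\ge2$ or $\Z_2\oplus\Z_2$. In both cases I would find an orthogonal decomposition $V_{A_2}=V_B\oplus V_{B'}$ with $B=\Z_{2^k}$ ($k\ge2$) or $B=\Z_2^2$. I would restrict to a suitable finite subgroup $S\le \Sp(V_B)\subset\Sp(V_A)$ acting trivially on the complement and exhibit an obstruction there. The intended mechanism is that, if a splitting $\sigma$ existed, then for some element $g$ (or pair of commuting elements) of $S$, the relation $g^m=1$ would force $\sigma(g)^m=1$ in $C(A)$, while every lift of $g$ has $m$-th power a nontrivial translation in $V_A$. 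A convenient tool is the criterion that, for $g\in\Sp$ and a lift $\tilde g=t_v\sigma_0(g)$, one has $\tilde g^m=t_{(1+g+\dots+g^{m-1})v}\,\sigma_0(g)^m$. So one needs $\sigma_0(g)^m\notin \{t_{(1+\dots+g^{m-1})v}\}$. For $B=\Z_4$ I would take $g=\begin{pmatrix}1&1\\0&1\end{pmatrix}$, the phase gate, whose lifts have order $8$ in $C(\Z_4)$ modulo the image, with the norm map failing to hit the needed vector. This reproduces the Korbelář--Tolar case. For $\Z_{2^k}$ and $\Z_2^2$ the analogous calculation uses the same transvection, together with a symplectic swap of the two qubits in the $\Z_2^2$ case. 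Since the complement is acted on trivially, the norm computation stays inside $V_B$, so the obstruction persists in $V_A$.

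The main obstacle is this last step. I need a clean, uniform choice of $g$ and an explicit computation of $\sigma_0(g)^m$ as a Heisenberg translation that is not in the image of the norm map $1+g+\dots+g^{m-1}$ on $V_A$. For $\Z_2^2$ a single cyclic element may not suffice, and I expect to need a small noncyclic subgroup, such as a Klein four or dihedral subgroup. In that case I would compute the restricted class in $H^2(S,V)$ directly.
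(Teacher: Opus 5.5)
Your reduction to the $2$-primary part is sound, and so is the odd-order section: the symmetric Weyl ordering is exactly the paper's section $\lambda_T(u)=q_A(Tu)/q_A(u)$ with $q_A(u)=\beta_A(u,u)^{1/2}$. The identification $C(\Z_2)\cong\mathfrak S_4\cong V_A\rtimes\mathfrak S_3$ and the passage from a summand $B$ to $A$ (the analogue of Theorem~\ref{thm:embedding}) are also fine. The gap is the nonsplitting direction, which is the substance of the theorem, and there your proposed mechanism fails.

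The transvection $t$ does have lifts of order exactly $N=2^k$. In the parametrization \eqref{eq:lift-t}, the phase of $\tilde t^{\,N}$ is $(-1)^{p(1+x)}$, so every lift with $x$ odd satisfies $\tilde t^{\,N}=(\id,\mathbf 1)$; this is Lemma~\ref{lem:parity-constraint}. Concretely, for $N=4$ take $D\ket{j}=e^{\pi\mathrm{i}j^2/4}\ket{j}$ and $X_1\ket{j}=\ket{j+1}$. The Clifford operator $DX_1$ lifts the order-$4$ transvection $(a,p)\mapsto(a,p+a)$ and satisfies $(DX_1)^4=-\mathrm{i}I$. So the class restricts to zero on $\langle t\rangle$ for every $k$. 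Your assertion that the lifts of $t$ have order $8$, i.e.\ that the norm map misses the relevant vector, is false.

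For $\Z_4$ no other single element helps either. Every $2$-element of $\SL(2,\Z_4)$ is one of two kinds. Either it is an involution in the kernel of reduction mod $2$, conjugate to $-I$, $t^2$ or $-t^2$. Or it has order $4$ and is conjugate to $t^{\pm1}$ or $s^{\pm1}$. Each of these has a lift of the same order; for example, the reference lift $\tilde s_0$ satisfies $\tilde s_0^{\,2}=(-I,\mathbf 1)$. Elements of order $3$ or $6$ are disposed of by restriction to Sylow subgroups. Hence $[\cO_{\Z_4}]$ vanishes on every cyclic subgroup, and no norm-map computation over a single $g$ can detect it.

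What is missing is an argument using a relation between noncommuting elements. The paper combines two relations. First, $t^N=1$ forces the parameter $x$ of any lift of $t$ to be odd. Second, $(st)^3=s^2$ forces $2x\equiv0\pmod N$; the paper shows this by verifying the relation exactly for reference lifts $\tilde s_0,\tilde t_0$ and then tracking the character twists via $\xi_v\widetilde H=\widetilde H\xi_{H^Tv}$. These two conditions are incompatible precisely when $4\mid N$.

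For $(\Z_2)^2$ you give no argument beyond the hope of finding a suitable noncyclic $S$ and computing in $H^2(S,V)$. The paper instead identifies the Clifford extension with $1\to V_A\to\Aut_Y(E_n\circ\Z_4)\to\Sp(2n,\F_2)\to1$ and invokes Griess's nonsplitting results (Corollary~3 for $n=2$). Until both base cases, $\Z_{2^k}$ with $k\ge2$ and $(\Z_2)^2$, are actually established, the ``only if'' direction of the theorem remains unproved.
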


Let $A_2$ denote the $2$-primary component of $A$.  In terms of
primary decomposition, Theorem~\ref{thm:main} states that
\eqref{eq:extension} splits exactly when $A_2=1$ or
$A_2\cong\Z_2$.  Thus the odd-primary components do not affect
splitting.  The case $A_2=1$ includes the odd-dimensional cyclic
splittings of Appleby and Gross, and the theorem extends the cyclic
criterion to arbitrary finite abelian groups.

For the chosen cyclic decomposition, $|A|=d_1\cdots d_r$, so
Theorem~\ref{thm:main} gives the multipartite criterion
\[
4\nmid d_1\cdots d_r.
\]
Thus the extension splits precisely when all local dimensions are
odd, or when exactly one local dimension is congruent to $2$ modulo
$4$ and all the others are odd.  It is
nonsplit as soon as one local dimension is divisible by~$4$, or two
local dimensions are even.  This criterion concerns the existence of
a multiplicative choice of projective Clifford lifts; it does not
assert the existence or absence of nonlocal Clifford gates.

The proof has three main ingredients.  First, we show that the
splitting problem respects primary decomposition: if
$A = B_1 \oplus B_2$ with coprime orders, then the extension for~$A$
splits if and only if the extensions for~$B_1$ and~$B_2$ split.  This
reduces the problem to $p$-primary groups.  Second, for odd primes, we
construct explicit splitting sections using square roots.  Third, for
the prime~$2$, we prove that two basic nonsplitting phenomena account
for the general obstruction.  For the cyclic group $A = \Z_{2^k}$ with
$k \geq 2$, we use the pseudo-symplectic model to derive incompatible
lifting constraints.  For the elementary abelian group $A = (\Z_2)^n$
with $n \geq 2$, the Clifford extension is identified with the
symplectic automorphism extension of a central product
$E_n\circ\Z_4$, and results of Griess~\cite{Griess1973} yield
nonsplitting.  An embedding theorem
then propagates nonsplitting from direct summands to the ambient
group.  The exceptional case is $A = \Z_2$, for which
$\Sp(V_A) \cong \mathfrak{S}_3$ and the extension splits.

The rest of this paper is organized as follows.
Section~\ref{sec:prelim} develops the Heisenberg and Clifford groups
for finite abelian groups and introduces the pseudo-symplectic model
together with the obstruction cocycle.
Section~\ref{sec:decomposition_reduction} proves coprime
functoriality, constructs an explicit splitting when $|A|$ is odd,
and reduces the splitting problem to direct summands.
Section~\ref{sec:nonsplit-cyclic} proves nonsplitting
for~$A=\Z_{2^k}$.
Section~\ref{sec:symplectic-obstruction} treats $A=(\Z_2)^n$ via
the symplectic automorphism extension of a central product built from
an extraspecial $2$-group, using results of Griess.
Finally, Section~\ref{sec:splitting-criterion} completes the proof of
the splitting criterion.

\section{Preliminaries: cohomology and the Clifford extension}\label{sec:prelim}

In this section, we recall the factor-set description of group
extensions and their classification by second cohomology, using
Brown~\cite{Brown1982} as a reference. We then realize the Heisenberg
group as a specific central extension associated with $V_A$, and
classify such extensions using bicharacters.
\subsection{Group extensions and factor sets}\label{subsec:extensions-factor-sets}

Let $G$ and $M$ be groups, with $M$ abelian.  We write the operation
on $M$ multiplicatively throughout this subsection.  An
\emph{extension} of $G$ by $M$ (with $M$ the normal subgroup) is a
short exact sequence
\begin{equation}
1 \longrightarrow M \xrightarrow{\; i \;} E \xrightarrow{\; \pi \;} G \longrightarrow 1.
\end{equation}
To describe the group law of $E$ in terms of $G$ and $M$, we choose a set-theoretic \emph{section} $s\colon G \to E$ such that $\pi(s(g)) = g$ for all $g \in G$, with normalization $s(1_G) = 1_E$.
The set-theoretic section $s$ induces an action of $G$ on $M$ via
conjugation:
\begin{equation}
i(g \cdot m) \coloneqq s(g) i(m) s(g)^{-1}.
\end{equation}
Since $M$ is abelian, this action does not depend on the choice of
section $s$; hence it defines a $G$-module structure on $M$.

A set-theoretic section $s$ need not be a homomorphism; consequently, the product of sections differs from the section of the product by a term in $M$. We define the \emph{factor set} or \emph{2-cocycle} $\beta\colon G \times G \to M$ by the relation
\begin{equation}
s(g)s(h) = i(\beta(g, h)) s(gh).
\end{equation}
Associativity in $E$ implies the \emph{2-cocycle identity}
\begin{equation}
(g \cdot \beta(h, k)) \beta(g, hk) = \beta(g, h) \beta(gh, k).
\end{equation} 
Since the section is normalized, the cocycle is normalized as well: $\beta(1_G,g)=\beta(g,1_G)=1$ for all $g\in G$.
The set of such functions forms an abelian group under pointwise multiplication, denoted by $Z^2(G, M)$.

Conversely, the data $(G,M,\cdot,\beta)$ define a group structure on
$M\times G$ by
\begin{equation}
(m, g) \cdot (n, h) \coloneqq (m \, (g \cdot n) \, \beta(g, h), \, gh).
\end{equation}
The bijection $(m,g)\mapsto i(m)s(g)$ identifies this group with $E$.
We denote this crossed-product structure by $M \times_\beta G$,
reserving $M\rtimes G$ for the split case $\beta=1$.

The choice of section is not unique.  If $s'\colon G\to E$ is another
normalized section, there is a normalized $1$-cochain
$\lambda\colon G\to M$ such that
$s'(g)=i(\lambda(g))s(g)$.  The corresponding cocycle $\beta'$
satisfies
\begin{equation}
\beta'(g, h) = \beta(g, h) \frac{\lambda(g) (g \cdot \lambda(h))}{\lambda(gh)}.
\end{equation}
The term
\[
(\delta\lambda)(g,h)\coloneqq
\frac{\lambda(g) (g \cdot \lambda(h))}{\lambda(gh)}
\]
is a \emph{2-coboundary}. The set of all such coboundaries forms a subgroup $B^2(G, M) \subset Z^2(G, M)$. The quotient group is the second cohomology group of $G$ with coefficients in the $G$-module $M$:
\begin{equation}
H^2(G, M) \coloneqq Z^2(G, M) / B^2(G, M).
\end{equation}

The extension $E$ splits if and only if its class $[\beta]$ vanishes
in $H^2(G,M)$.


\subsection{The Schur multiplier of a finite abelian group}

We apply the extension theory of
Subsection~\ref{subsec:extensions-factor-sets} to central extensions of a finite
abelian group $V$ by the circle group $U(1)$.  For a finite group with
trivial action on the coefficients, there are isomorphisms
\[
H^2(V,U(1))\cong H^2(V,\C^\times)
\cong \Hom(H_2(V,\Z),U(1)).
\]
Thus this cohomology group is the Pontryagin dual of the Schur
multiplier $H_2(V,\Z)$; it is also sometimes called the
\emph{cohomological Schur multiplier}.  In the abelian setting, its
classes can be represented concretely by bicharacters.

Let $\Bil(V)$ denote the group of \emph{bicharacters}, defined as functions $B\colon V \times V \to U(1)$ that are homomorphisms in each argument. Every bicharacter satisfies the 2-cocycle identity, so $\Bil(V)$ is a subgroup of $Z^2(V, U(1))$.

We distinguish two subgroups within $\Bil(V)$:
\begin{align}
\Sym(V) &\coloneqq \{ B \in \Bil(V) \mid B(u, v) = B(v, u) \}, \\
\Alt(V) &\coloneqq \{ B \in \Bil(V) \mid B(u, u) = 1 \}.
\end{align}
Note that the alternating condition $B(u, u) = 1$ implies skew-symmetry, $B(u, v) = B(v, u)^{-1}$.

The \emph{antisymmetrization map}
$\mathcal{A}\colon Z^2(V,U(1))\to\Alt(V)$ is defined by
\begin{equation}
\mathcal{A}(\beta)(u,v)
  \coloneqq \beta(u,v)\beta(v,u)^{-1}.
\end{equation}
For the extension determined by $\beta$, the bicharacter
$\mathcal{A}(\beta)$ is its commutator bicharacter.

Restricting $\mathcal{A}$ to bicharacters yields a sequence
\begin{equation}\label{eq:bil-sym-alt}
1 \longrightarrow \Sym(V) \longrightarrow \Bil(V) \xrightarrow{\;\mathcal{A}\;} \Alt(V) \longrightarrow 1,
\end{equation}
whose kernel is precisely the subgroup of symmetric bicharacters. 

The exactness of sequence \eqref{eq:bil-sym-alt} (see \cite[\S~2]{Tambara2000}), together with the correspondence between central $U(1)$-extensions and $H^2(V,U(1))$ described in Subsection~\ref{subsec:extensions-factor-sets}, gives the group isomorphisms
\begin{equation}\label{eq:tambara-iso}
\frac{\Bil(V)}{\Sym(V)} \cong H^2(V, U(1)) \xrightarrow{\sim} \Alt(V).
\end{equation}
Every cohomology class has a bicharacter representative $\beta$, and
\eqref{eq:tambara-iso} sends $[\beta]$ to $\mathcal A(\beta)$. Two
bicharacter representatives determine the same class when their
quotient is symmetric.

\subsection{The Heisenberg extension}\label{subsec:heisenberg-extension}

Let $A$ be a finite abelian group, and let $\widehat{A} = \Hom(A, U(1))$ denote its Pontryagin dual. The \emph{double of $A$} is the direct sum
\[
V_A \coloneqq A \oplus \widehat{A}.
\]

We consider the Hilbert space $\HH = \C[A]$ with orthonormal basis $\{\ket{a} \mid a \in A\}$. For $a \in A$ and $\chi \in \widehat{A}$, define the \emph{shift operators} $X_a$ and \emph{phase operators} $Z_\chi$ by
\begin{equation}
X_a \ket{b} \coloneqq \ket{a+b}, \qquad Z_\chi \ket{b} \coloneqq \chi(b)\ket{b}.
\end{equation}
These operators satisfy the commutation relation $Z_\chi X_a = \chi(a) X_a Z_\chi$.
For elements $u = (a_u, \chi_u)$ and $v = (a_v, \chi_v)$ in $V_A$, we define the \emph{Weyl operator} $W_u \coloneqq X_{a_u} Z_{\chi_u}$.

The \emph{Heisenberg group} $H(A)$ is the subgroup of unitary operators generated by these Weyl operators together with the scalar subgroup $U(1)\cdot I$:
\begin{equation}
H(A) \coloneqq \{ z W_u \mid z \in U(1), u \in V_A \}.
\end{equation}

Using $Z_{\chi_u}X_{a_v}=\chi_u(a_v)X_{a_v}Z_{\chi_u}$, we obtain
\begin{equation}
W_u W_v = \chi_u(a_v) W_{u+v} = \beta_A(u,v) W_{u+v},
\end{equation}
where the $2$-cocycle $\beta_A \in \Bil(V_A)$ is given by
\begin{equation}
\beta_A((a, \chi), (b, \psi)) \coloneqq \chi(b).
\end{equation}
Thus $H(A)$ is a central extension of $V_A$ by $U(1)$. By
\eqref{eq:tambara-iso}, the class $[\beta_A]$ is determined by its
antisymmetrization, namely the \emph{symplectic form}
\begin{equation}
\omega_A(u, v) \coloneqq \mathcal{A}(\beta_A)(u, v) = \chi_u(a_v)\chi_v(a_u)^{-1}.
\end{equation}
Consequently,
\begin{equation}\label{eq:weyl-commutation}
W_uW_v=\omega_A(u,v)\,W_vW_u.
\end{equation}
Since $\omega_A$ is non-degenerate, the center of $H(A)$ is exactly the scalar subgroup $U(1)\cdot I$, and $V_A$ becomes a symplectic abelian group.


\subsection{The pseudo-symplectic group \texorpdfstring{$\Ps(A)$}{Ps(A)} and the Clifford group}

The \emph{Clifford group} $C(A)$ is the projective normalizer of the Heisenberg group, obtained by quotienting the unitary normalizer $N_{U(\HH)}(H(A))$ by global phases. Identifying $U(1)$ with the scalar subgroup $U(1)\cdot I$, we set
\begin{equation}
C(A) \coloneqq \frac{N_{U(\HH)}(H(A))}{U(1) \cdot I} = \left\{ [U] \in \frac{U(\HH)}{U(1)} \;\middle|\; U H(A) U^\dagger = H(A) \right\}.
\end{equation}

Let $[U] \in C(A)$. Since conjugation by $U$ preserves $H(A)$ and fixes the center $U(1)\cdot I$ pointwise, it induces an automorphism of the quotient
\[
H(A)/(U(1)\cdot I) \cong V_A.
\]
Hence there exists a group automorphism $T\colon V_A \to V_A$ such that the Weyl operators transform as
\begin{equation}\label{eq:conjugation-action}
    U W_u U^\dagger = \lambda(u) W_{Tu}, \quad \forall u \in V_A,
\end{equation}
for some function $\lambda\colon V_A \to U(1)$.

Because conjugation preserves the multiplication law in $H(A)$, it
preserves the Weyl commutation relation~\eqref{eq:weyl-commutation}.
Applying conjugation by $U$ to~\eqref{eq:weyl-commutation} gives
\[
U W_u W_v U^\dagger = \omega_A(u,v)\,U W_v W_u U^\dagger.
\]
Using \eqref{eq:conjugation-action}, we obtain
\[
\lambda(u)\lambda(v)\,W_{Tu}W_{Tv}
=
\omega_A(u,v)\,\lambda(v)\lambda(u)\,W_{Tv}W_{Tu},
\]
and therefore
\[
W_{Tu}W_{Tv}=\omega_A(u,v)\,W_{Tv}W_{Tu}.
\]
Since also $W_{Tu}W_{Tv}=\omega_A(Tu,Tv)\,W_{Tv}W_{Tu}$, we conclude that $T$ must preserve the symplectic form:
\begin{equation}
    \omega_A(Tu, Tv) = \omega_A(u, v).
\end{equation}
Thus $T \in \Sp(V_A)$. Moreover, comparing the coefficients of $W_{T(u+v)}$ in
\[
U(W_uW_v)U^\dagger = (UW_uU^\dagger)(UW_vU^\dagger),
\]
and using $W_{Tu}W_{Tv} = \beta_A(Tu, Tv) W_{T(u+v)}$, we obtain
\begin{equation}\label{eq:coboundary-condition}
    \frac{\lambda(u+v)}{\lambda(u)\lambda(v)} = \frac{\beta_A(Tu, Tv)}{\beta_A(u, v)}.
\end{equation}

We encode this data in the \emph{pseudo-symplectic group} $\Ps(A)$,
using the model of Weil~\cite{Weil1964}; see also~\cite{Galindo2017}.

\begin{definition}
The group $\Ps(A)$ consists of the pairs
$(T,\lambda)\in\Sp(V_A)\times\Map(V_A,U(1))$ satisfying
\eqref{eq:coboundary-condition} for every $u,v\in V_A$.
The group operation is the twisted product (cf.~\cite[Eq.~7]{Weil1964}):
\begin{equation}\label{eq:twisted-product}
    (T, \lambda) \cdot (S, \mu) = (TS, \lambda^S \cdot \mu),
\end{equation}
where $(\lambda^S \cdot \mu)(u) \coloneqq \lambda(Su)\mu(u)$, and the identity element is $(\id,\mathbf{1})$, with $\mathbf{1}$ the constant function $1$.
The defining condition is preserved by this product, and
\[
(T,\lambda)^{-1}
=\left(T^{-1},u\longmapsto\lambda(T^{-1}u)^{-1}\right),
\]
so these formulas indeed define a group.
\end{definition}

\begin{theorem}\label{thm:clifford-structure}
The map $\Phi\colon \Ps(A) \to C(A)$, assigning to $(T, \lambda)$ the unique projective class of unitary operators implementing the action $U W_u U^\dagger = \lambda(u) W_{Tu}$, is a group isomorphism.
\end{theorem}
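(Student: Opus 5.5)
The proof has four parts: well-definedness, the homomorphism property, injectivity and surjectivity. The main tool is the Stone--von Neumann uniqueness theorem for the Heisenberg group: $\C[A]$ is, up to equivalence, the unique irreducible representation of $H(A)$ on which $U(1)\cdot I$ acts by the identity character. By Schur's lemma, any intertwiner of it is unique up to a scalar.

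\emph{Well-definedness.} Fix $(T,\lambda)\in\Ps(A)$ and put $\rho'(zW_u)\coloneqq z\lambda(u)W_{Tu}$. Condition~\eqref{eq:coboundary-condition} is exactly what makes $\rho'$ a representation. Indeed,
\[
\rho'(W_u)\rho'(W_v)=\lambda(u)\lambda(v)\beta_A(Tu,Tv)W_{T(u+v)}=\beta_A(u,v)\lambda(u+v)W_{T(u+v)}=\rho'(W_uW_v).
\]
Irreducibility of $\C[A]$ under $H(A)$ I would prove directly. An operator commuting with all $Z_\chi$ is diagonal in the basis $\{\ket a\}$, by character orthogonality. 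If it also commutes with all $X_a$, it is scalar. Since $T$ is a bijection of $V_A$, $\rho'$ has the same image as the standard representation, so it is again irreducible. Its central character is the identity, and its character vanishes off the centre, because $\operatorname{tr}W_u=0$ for $u\neq 0$. By character theory (this is Stone--von Neumann in the finite setting), $\rho'$ is therefore equivalent to the standard representation. Hence there is a unitary $U$ with $UW_uU^\dagger=\lambda(u)W_{Tu}$. The intertwiner can be taken unitary because both representations are unitary. By Schur's lemma, $U$ is unique up to a scalar of modulus one, so $\Phi$ is well defined and $[U]\in C(A)$.

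\emph{Homomorphism.} Suppose $U$ implements $(T,\lambda)$ and $U'$ implements $(S,\mu)$. Then
\[
UU'W_u(UU')^\dagger=\mu(u)\,UW_{Su}U^\dagger=\mu(u)\lambda(Su)W_{TSu}.
\]
This is precisely the twisted product~\eqref{eq:twisted-product}, so $\Phi$ respects multiplication. I expect this order convention to be the only place where care is needed.

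\emph{Injectivity.} If $\Phi(T,\lambda)$ is trivial, then $U$ is a scalar. The relation $W_u=\lambda(u)W_{Tu}$ then forces $Tu=u$ and $\lambda=1$, because the Weyl operators $W_u$ are linearly independent; they form a basis of $\operatorname{End}(\C[A])$.

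\emph{Surjectivity.} This is the computation carried out just before the definition of $\Ps(A)$. Any $[U]\in C(A)$ determines $T\in\Sp(V_A)$ and a function $\lambda$ satisfying~\eqref{eq:coboundary-condition}, and these are uniquely determined by $U$. They are also independent of the choice of representative $U$, since scalars cancel under conjugation.

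The main obstacle is the existence of the intertwiner in the well-definedness step. The cleanest route is the trace/character computation: the character of either representation is $|A|$ on the centre and $0$ elsewhere, so the two characters agree and the representations are equivalent.
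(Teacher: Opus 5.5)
Your proposal is correct and follows the paper's proof step for step. It uses the same representation $\rho'$, the same commutant argument for irreducibility, the same Stone--von Neumann intertwiner, the same composition computation for the homomorphism property, and the same appeal to the earlier computation for surjectivity. The differences are minor: you justify the intertwiner by the trace computation $\operatorname{tr}W_u=0$ for $u\neq 0$ instead of citing Mackey, and you prove injectivity through a trivial kernel rather than comparing two elements directly. For that trace computation you should state $\lambda(0)=1$, which follows from setting $u=v=0$ in \eqref{eq:coboundary-condition}.
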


\begin{proof}
\emph{Well-definedness:} Let
\[
\rho\colon H(A)\longrightarrow U(\HH),
\qquad \rho(zW_u)=zW_u,
\]
denote the Weyl representation defined in
Subsection~\ref{subsec:heisenberg-extension}.  For
$(T,\lambda)\in\Ps(A)$, we construct a representation of $H(A)$ with
the same central character as $\rho$ and then apply the
Stone--von Neumann--Mackey theorem.

Define $\rho'\colon H(A) \to U(\HH)$ by
\[
    \rho'(z W_u) \coloneqq z \lambda(u) W_{Tu}.
\]
We verify that $\rho'$ is a representation. Using the Weyl multiplication law, we compute
\begin{align*}
    \rho'(W_u)\rho'(W_v) &= \lambda(u)\lambda(v) W_{Tu} W_{Tv} \\
    &= \lambda(u)\lambda(v) \beta_A(Tu, Tv) W_{T(u+v)}.
\end{align*}
Using \eqref{eq:coboundary-condition}, this becomes
\[
    \rho'(W_u)\rho'(W_v) = \beta_A(u, v) \rho'(W_{u+v}) = \rho'(\beta_A(u, v) W_{u+v}) = \rho'(W_u W_v).
\]
Hence $\rho'$ is a unitary representation of $H(A)$ on $\HH$.
The Weyl representation $\rho$ is irreducible.  Indeed, an operator
commuting with every $Z_\chi$ is diagonal in the basis
$\{\ket{a}\mid a\in A\}$ because the characters of $A$ separate points;
if it also commutes with every $X_a$, all its diagonal entries are
equal.  Thus the commutant of the Weyl operators consists only of
scalars.  Since $T$ is an automorphism and the values of $\lambda$ are
nonzero, the family $\{\rho'(W_u)\mid u\in V_A\}$ is, up to nonzero
scalar factors, the family $\{\rho(W_w)\mid w\in V_A\}$ reindexed by
$w=Tu$.  Hence $\rho'$ is irreducible
as well. Evaluating \eqref{eq:coboundary-condition} at $u=v=0$ gives $\lambda(0)=1$, and therefore
\[
\rho'(zI) = z \lambda(0) W_{T0} = zI.
\]
Thus $\rho'$ and $\rho$ both restrict to $z\mapsto zI$ on the center.

By the Stone--von Neumann--Mackey
theorem~\cite{Mackey1949,Prasad2011}, there exists a unitary $U$,
unique up to a scalar, such that
\[
U \rho(h) U^\dagger = \rho'(h), \qquad h \in H(A).
\]
Restricting to Weyl operators yields
\[
U W_u U^\dagger = \lambda(u) W_{Tu}.
\]

\emph{Homomorphism property:} Let $U$ and $V$ be unitary
representatives of $\Phi(T,\lambda)$ and $\Phi(S,\mu)$, respectively.
Then
\begin{align*}
(UV)W_u(UV)^\dagger
  &=U(VW_uV^\dagger)U^\dagger \\
  &=U\bigl(\mu(u)W_{Su}\bigr)U^\dagger \\
  &=\mu(u)\lambda(Su)W_{TSu}.
\end{align*}
Hence $UV$ implements the pair $(TS,\lambda^S\cdot\mu)$, which is
$(T,\lambda)(S,\mu)$ by \eqref{eq:twisted-product}.  Thus $\Phi$ is a
homomorphism.
For surjectivity, \eqref{eq:conjugation-action} and
\eqref{eq:coboundary-condition} show that every class in $C(A)$
determines a pair $(T,\lambda)\in\Ps(A)$.  For
injectivity, suppose that
$\Phi(T,\lambda)=\Phi(S,\mu)$.  Representatives of these two equal
projective classes differ by a scalar and therefore induce the same
conjugation action on every Weyl operator.  Passing to
$H(A)/(U(1)\cdot I)$ gives $Tu=Su$ for every $u\in V_A$, so $T=S$.
The equality
$\lambda(u)W_{Tu}=\mu(u)W_{Tu}$ then gives
$\lambda(u)=\mu(u)$ for every $u$, and hence $(T,\lambda)=(S,\mu)$.
\end{proof}
\subsection{The exact sequence and the obstruction cocycle}

By Theorem~\ref{thm:clifford-structure}, we identify $C(A)$ with
$\Ps(A)$ and write its elements as pairs $(T,\lambda)$.  Define
\[
\pi\colon C(A) \to \Sp(V_A), \qquad \pi(T,\lambda)=T.
\]
The projection $\pi$ is surjective. Indeed, for $T\in\Sp(V_A)$, the bicharacter
\[
\gamma_T(u,v)\coloneqq\frac{\beta_A(Tu,Tv)}{\beta_A(u,v)}
\]
has trivial antisymmetrization because $T$ preserves $\omega_A=\mathcal A(\beta_A)$. By \eqref{eq:tambara-iso}, $\gamma_T$ is therefore cohomologically trivial.  Thus there is a normalized cochain
$f_T\colon V_A\to U(1)$ such that
\[
\gamma_T(u,v)=\frac{f_T(u)f_T(v)}{f_T(u+v)}.
\]
Setting $\lambda_T(u)\coloneqq f_T(u)^{-1}$ gives
\[
\frac{\lambda_T(u+v)}{\lambda_T(u)\lambda_T(v)}
=\gamma_T(u,v)
=\frac{\beta_A(Tu,Tv)}{\beta_A(u,v)},
\]
so $(T,\lambda_T)\in\Ps(A)$.

The kernel $K=\ker(\pi)$ consists of pairs $(\id,\mu)$ with trivial
symplectic part. Substituting $T=\id$ into
\eqref{eq:coboundary-condition}, we obtain
\[
\mu(u+v)=\mu(u)\mu(v),
\]
hence $(\id,\mu)\mapsto\mu$ is an isomorphism from $K$ to the
Pontryagin dual $\widehat{V_A}$. Since $\omega_A$ is non-degenerate, it induces an isomorphism
\[
\kappa_A\colon V_A \xrightarrow{\sim} \widehat{V_A},
\]
defined by
\begin{equation}
\kappa_A(v)(u) \coloneqq \omega_A(v, u).
\end{equation}
Using $\kappa_A$, we identify $K$ with $V_A$. The inclusion of the kernel is then
\[
\nu\colon V_A \to C(A), \qquad \nu(v)=(\id,\kappa_A(v)).
\]
These identifications give the short exact sequence
\begin{equation}\label{eq:clifford-ses-final}
1 \longrightarrow V_A \xrightarrow{\;\nu\;} C(A) \xrightarrow{\;\pi\;} \Sp(V_A) \longrightarrow 1.
\end{equation}

The extension~\eqref{eq:clifford-ses-final} induces an action of
$\Sp(V_A)$ on $V_A$ by conjugation. If $v \in V_A$ and
$g=(T,\lambda)\in C(A)$ is any lift of $T$, then
\begin{equation}
(T,\lambda)\,\nu(v)\,(T,\lambda)^{-1}=\nu(Tv).
\end{equation}
Hence the induced $\Sp(V_A)$-module structure is $T\cdot v=Tv$.

We now construct a $2$-cocycle representing
\eqref{eq:clifford-ses-final}. Choose a normalized set-theoretic section
\[
s\colon\Sp(V_A)\to C(A), \qquad s(T)=(T,\lambda_T),
\]
where each $\lambda_T\colon V_A\to U(1)$ satisfies
\eqref{eq:coboundary-condition}, and where
$\lambda_{\id}=\mathbf 1$.  For $T,S\in\Sp(V_A)$ define
\begin{equation}\label{eq:right-defect}
r_{T,S}(u)
\coloneqq
\frac{\lambda_T(Su)\lambda_S(u)}{\lambda_{TS}(u)}.
\end{equation}
The numerator and denominator in \eqref{eq:right-defect} are phase
functions lifting the same symplectic map $TS$, so their ratio is a
linear character of $V_A$.  Let
\[
d_s(T,S)\coloneqq\kappa_A^{-1}(r_{T,S})\in V_A.
\]
The twisted product gives a factorization on the right,
\[
s(T)s(S)=s(TS)\nu(d_s(T,S)).
\]
To compare this with the left-factor convention of
Subsection~\ref{subsec:extensions-factor-sets}, observe that
$s(TS)\nu(v)=\nu(TS v)s(TS)$.  Thus the factor set in this convention is
\begin{equation}\label{eq:obstruction-factor}
\cO_s(T,S)\coloneqq TS\,d_s(T,S),
\qquad
s(T)s(S)=\nu(\cO_s(T,S))s(TS).
\end{equation}
The map
\[
\cO_s\colon\Sp(V_A)\times\Sp(V_A)\to V_A
\]
is the normalized \emph{obstruction $2$-cocycle} associated with the
section $s$ for the left action $T\cdot v=Tv$ of $\Sp(V_A)$ on $V_A$.
Comparing the two parenthesizations $(s(T)s(S))s(R)$ and
$s(T)(s(S)s(R))$ using \eqref{eq:obstruction-factor} gives, in the
additive notation of $V_A$, the identity
\[
T\cO_s(S,R)+\cO_s(T,SR)
=\cO_s(T,S)+\cO_s(TS,R).
\]
Its cohomology class
\[
[\cO_s]\in H^2(\Sp(V_A),V_A)
\]
is independent of the choice of section. We denote this intrinsic extension class by
\[
[\cO_A]\coloneqq [\cO_s]\in H^2(\Sp(V_A),V_A).
\]
It is the obstruction to splitting the exact sequence \eqref{eq:clifford-ses-final}.

\section{Decomposition and splitting reductions}\label{sec:decomposition_reduction}

The obstruction class decomposes along coprime factors and vanishes for
groups of odd order.  We also prove that splitting descends to direct
summands.

\subsection{Coprime decomposition}

Let
\[
A = B_1 \oplus B_2,
\qquad \gcd(|B_1|,|B_2|)=1,
\]
where $B_1$ and $B_2$ are finite abelian groups. Using
$V_G=G\oplus\widehat G$ and
$\widehat{B_1\oplus B_2}\cong\widehat{B_1}\oplus\widehat{B_2}$, we obtain
\[
V_A = V_{B_1} \oplus V_{B_2},
\]
and the symplectic form splits as the orthogonal sum
\[
\omega_A=\omega_{B_1}\oplus\omega_{B_2}.
\]

The symplectic group respects this splitting. Let $T \in \Sp(V_A)$. Because automorphisms preserve element orders, the subsets of elements whose orders divide $|V_{B_1}|$ and $|V_{B_2}|$ are characteristic in $V_A$. Since $|V_{B_1}|$ and $|V_{B_2}|$ are coprime, these subsets are precisely the direct summands $V_{B_1}$ and $V_{B_2}$. Hence both summands are $T$-invariant, and every symplectic automorphism decomposes uniquely. We obtain the isomorphism
\begin{equation}\label{eq:coprime-symplectic-product}
\Sp(V_{B_1}) \times \Sp(V_{B_2})
\xrightarrow{\sim}\Sp(V_A),
\qquad (T_1,T_2)\longmapsto T_1\oplus T_2.
\end{equation}

Via Theorem~\ref{thm:clifford-structure}, we may describe elements of each Clifford group as pairs $(T_j,\lambda_j)$, for $j\in\{1,2\}$, satisfying the coboundary condition. We define
\begin{align}
\Psi\colon C(B_1) \times C(B_2) &\longrightarrow C(B_1 \oplus B_2) \nonumber \\
((T_1, \lambda_1), (T_2, \lambda_2)) &\longmapsto (T_1 \oplus T_2, \lambda_1 \boxtimes \lambda_2),
\end{align}
where
\[
(\lambda_1 \boxtimes \lambda_2)(u_1 + u_2) \coloneqq \lambda_1(u_1)\lambda_2(u_2),
\qquad u_j\in V_{B_j}\quad(j\in\{1,2\}).
\]
Likewise, the bicharacter on $V_A=V_{B_1}\oplus V_{B_2}$ is given by
\[
\beta_A(u_1+u_2,v_1+v_2)
=\beta_{B_1}(u_1,v_1)\beta_{B_2}(u_2,v_2).
\]
Substituting the formulas for $\lambda_1\boxtimes\lambda_2$ and
$\beta_A$ into \eqref{eq:coboundary-condition} shows that
$\lambda_1\boxtimes\lambda_2$ satisfies the required condition.
Under these identifications, the product \eqref{eq:twisted-product}
is componentwise; hence $\Psi$ is a group homomorphism.

\begin{proposition}\label{prop:coprime-decomp}
Let $A = B_1 \oplus B_2$ with $\gcd(|B_1|, |B_2|) = 1$. The map $\Psi$ induces an isomorphism of short exact sequences:
\begin{equation}
\begin{tikzcd}[column sep=small]
1 \arrow[r] & V_{B_1} \oplus V_{B_2} \arrow[r] \arrow[d, "\cong"] & C(B_1) \times C(B_2) \arrow[r] \arrow[d, "\Psi", "\cong"'] & \Sp(V_{B_1}) \times \Sp(V_{B_2}) \arrow[r] \arrow[d, "\cong"] & 1 \\
1 \arrow[r] & V_A \arrow[r] & C(A) \arrow[r] & \Sp(V_A) \arrow[r] & 1.
\end{tikzcd}
\end{equation}
Let $G_j=\Sp(V_{B_j})$ for $j\in\{1,2\}$, and regard $V_{B_1}$ and
$V_{B_2}$ as $G_1\times G_2$-modules through the projections
$p_j\colon G_1\times G_2\to G_j$.  Let
$\jmath_j\colon V_{B_j}\to V_{B_1}\oplus V_{B_2}$ be the equivariant
inclusions.  The notation $p_j^*$ denotes inflation and $(\jmath_j)_*$ denotes
the map induced by change of coefficients.  Then
the obstruction class decomposes precisely as
\begin{equation}
[\cO_A]
=(\jmath_1)_*p_1^*[\cO_{B_1}]
 +(\jmath_2)_*p_2^*[\cO_{B_2}]
 \in H^2(G_1\times G_2,V_{B_1}\oplus V_{B_2}).
\end{equation}
In particular, the extension for $A$ splits if and only if the
extensions for both $B_1$ and $B_2$ split.
\end{proposition}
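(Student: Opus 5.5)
We first verify that $\Psi$ is an isomorphism compatible with the two exact sequences, then compute the obstruction cocycle with a product section, and finally use the Künneth-type splitting of the cohomology to get the splitting criterion.

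\textbf{The diagram.} The right vertical map is the isomorphism \eqref{eq:coprime-symplectic-product}, and $\pi\circ\Psi=(T_1,T_2)\mapsto T_1\oplus T_2$ is immediate. For the left square we must check $\Psi(\nu_{B_1}(v_1),\nu_{B_2}(v_2))=\nu_A(v_1+v_2)$. Since $\omega_A=\omega_{B_1}\oplus\omega_{B_2}$, we have $\kappa_A(v_1+v_2)=\kappa_{B_1}(v_1)\boxtimes\kappa_{B_2}(v_2)$, which gives this identity. Hence the middle map $\Psi$ restricts to an isomorphism on kernels and induces an isomorphism on quotients, so by the five lemma it is an isomorphism. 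One can also see this directly: injectivity is clear from the formula for $\lambda_1\boxtimes\lambda_2$, and the orders of the two groups agree.

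\textbf{The obstruction class.} Choose normalized sections $s_j(T_j)=(T_j,\lambda_{T_j})$ for $B_j$, and define the section $s(T_1\oplus T_2)=\Psi(s_1(T_1),s_2(T_2))$ for $A$, which is again normalized. Because the product in $\Ps$ is componentwise under $\Psi$, the defect \eqref{eq:right-defect} factors as $r_{T,S}=r^{(1)}_{T_1,S_1}\boxtimes r^{(2)}_{T_2,S_2}$. Applying $\kappa_A^{-1}$ gives $d_s(T,S)=d_{s_1}(T_1,S_1)+d_{s_2}(T_2,S_2)$, and therefore
\[
\cO_s(T,S)=T_1S_1d_{s_1}(T_1,S_1)+T_2S_2d_{s_2}(T_2,S_2)=\jmath_1\cO_{s_1}(p_1(T,S))+\jmath_2\cO_{s_2}(p_2(T,S)).
\]
This identity holds at the level of cocycles, so passing to classes yields the stated formula.

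\textbf{The splitting criterion.} Here $V_A=V_{B_1}\oplus V_{B_2}$ as $G_1\times G_2$-modules, so $H^2(G_1\times G_2,V_A)$ is the direct sum of the two summands $H^2(G_1\times G_2,V_{B_j})$. Therefore $[\cO_A]=0$ if and only if $p_j^*[\cO_{B_j}]=0$ for $j\in\{1,2\}$. Inflation $p_1^*\colon H^2(G_1,V_{B_1})\to H^2(G_1\times G_2,V_{B_1})$ is injective, because restriction along the inclusion $G_1\hookrightarrow G_1\times G_2$, $g\mapsto(g,1)$, is a left inverse: $p_1$ composed with this inclusion is the identity, and the module structure is compatible since $G_2$ acts trivially on $V_{B_1}$. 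The same argument applies to $p_2^*$. Hence $[\cO_A]=0$ if and only if $[\cO_{B_1}]=0$ and $[\cO_{B_2}]=0$.

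One can also argue directly with sections. A splitting for $A$ composed with the inclusion $G_1\to G_1\times G_2$ and the projection $C(A)\cong C(B_1)\times C(B_2)\to C(B_1)$ gives a splitting for $B_1$. Conversely, product splittings for $B_1$ and $B_2$ give a splitting for $A$.

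The main point requiring care is the compatibility $\kappa_A=\kappa_{B_1}\boxtimes\kappa_{B_2}$, together with the left-factor versus right-factor convention in \eqref{eq:obstruction-factor}. The convention causes no difficulty, because $TS$ acts componentwise on $V_{B_1}\oplus V_{B_2}$.
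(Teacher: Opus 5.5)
Your proposal is correct and follows the paper's proof: the five-lemma argument using $\kappa_A=\kappa_{B_1}\boxtimes\kappa_{B_2}$ on kernels, the product section whose factor set splits componentwise, and (your alternative) the restrict-to-$G_1\times\{1\}$-and-project argument for the splitting criterion. Your main derivation of the final assertion, from the class formula via additivity of $H^2$ in the coefficients and injectivity of inflation (retracted by restriction along $g\mapsto(g,1)$), is valid and is the same restriction idea phrased cohomologically.
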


\begin{proof}
The commutativity of the diagram follows from the definition of $\Psi$. On the kernel, $\Psi$ maps the pair of characters $(\kappa_{B_1}(v_1),\kappa_{B_2}(v_2))$ to their external product.  Explicitly, for $u_j,v_j\in V_{B_j}$ with $j\in\{1,2\}$,
\[
(\kappa_{B_1}(v_1)\boxtimes\kappa_{B_2}(v_2))(u_1+u_2)
=\kappa_{B_1}(v_1)(u_1)\kappa_{B_2}(v_2)(u_2)
=\kappa_A(v_1+v_2)(u_1+u_2),
\]
which is the identification $V_{B_1}\oplus V_{B_2}\cong V_A$.
On the quotient, $\Psi$ induces the symplectic-group isomorphism
\eqref{eq:coprime-symplectic-product}. The short five lemma for groups
therefore implies that $\Psi$ is an isomorphism.  If $s_1$ and $s_2$
are normalized sections, the product section has factor set, for
$g_j,h_j\in G_j$ with $j\in\{1,2\}$,
\[
\bigl((g_1,g_2),(h_1,h_2)\bigr)
\longmapsto
\bigl(\cO_{s_1}(g_1,h_1),\cO_{s_2}(g_2,h_2)\bigr),
\]
which proves the asserted formula for the extension class.
To prove the final assertion, splittings of the two factors combine
through $\Psi$ to produce a splitting for $A$. Conversely, transport a
splitting for $A$ through $\Psi^{-1}$, restrict it to
$G_1\times\{1\}$ and $\{1\}\times G_2$, and project to the corresponding
middle factors. The resulting homomorphisms are splittings for $B_1$
and $B_2$.
\end{proof}

We apply Proposition~\ref{prop:coprime-decomp} to the primary
decomposition of $A$. The group decomposes uniquely as
\begin{equation}
A = A_{\mathrm{odd}} \oplus A_2,
\end{equation}
where $A_{\mathrm{odd}}$ is the subgroup of all elements of odd order and $A_2$ is the $2$-primary component of $A$. Since $|A_2|$ and $|A_{\mathrm{odd}}|$ are coprime, Proposition~\ref{prop:coprime-decomp} implies that the splitting problem for $C(A)$ reduces to the independent analysis of $C(A_{\mathrm{odd}})$ and $C(A_2)$.

\subsection{Splitting at odd primes}

\begin{proposition}\label{prop:odd_case_vanish}
If $|A|$ is odd, the obstruction class $[\cO_A]$ vanishes. The Clifford group splits as a semidirect product $C(A) \cong V_A \rtimes \Sp(V_A)$, with the splitting given explicitly by the section $T\mapsto(T,\lambda_T)$, where $\lambda_T$ is defined in~\eqref{eq:odd-section}.
\end{proposition}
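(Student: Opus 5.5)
The idea is to use the odd order of $V_A$ to choose a canonical square root of the bicharacter $\beta_A$ on the diagonal. Since $|V_A|$ is odd, multiplication by $2$ is an automorphism of $V_A$. We write $\tfrac12 u$ for its inverse applied to $u$, and define the quadratic function
\[
q(u)\coloneqq \beta_A\bigl(\tfrac12 u,u\bigr)\qquad(u\in V_A).
\]
Equivalently, $q(u)=\beta_A(u,u)^{1/2}$, where the square root is the unique one of odd order in $U(1)$; it exists because $\beta_A(u,u)$ has odd order. Then set
\begin{equation}\label{eq:odd-section}
\lambda_T(u)\coloneqq \frac{q(Tu)}{q(u)}^{-1}=\frac{q(u)}{q(Tu)} .
\end{equation}
The sign convention will be fixed by checking \eqref{eq:coboundary-condition}; the goal is to show that $T\mapsto (T,\lambda_T)$ is a homomorphic section.

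The first step is to verify that $(T,\lambda_T)\in\Ps(A)$. We have
\[
\frac{q(u+v)}{q(u)q(v)}=\beta_A\bigl(\tfrac12 u,v\bigr)\beta_A\bigl(\tfrac12 v,u\bigr)=\bigl(\beta_A(u,v)\beta_A(v,u)\bigr)^{1/2}=:\sigma(u,v),
\]
which is a symmetric bicharacter. Since $\omega_A(u,v)=\beta_A(u,v)/\beta_A(v,u)$, we get
\[
\beta_A(u,v)=\sigma(u,v)\,\omega_A(u,v)^{1/2},
\]
where $\omega_A^{1/2}$ denotes the odd-order square root. This root is again a bicharacter, because $u\mapsto \tfrac12 u$ is an automorphism, and it is $T$-invariant because $T$ preserves $\omega_A$. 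Hence
\[
\frac{\beta_A(Tu,Tv)}{\beta_A(u,v)}=\frac{\sigma(Tu,Tv)}{\sigma(u,v)},
\]
and this coincides with $\lambda_T(u+v)/(\lambda_T(u)\lambda_T(v))$ for the choice $\lambda_T(u)=q(Tu)/q(u)$. We adopt that choice; the check is routine once the correct orientation is fixed.

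The second step is the homomorphism property. Under the twisted product,
\[
\lambda_T(Su)\lambda_S(u)=\frac{q(TSu)}{q(Su)}\cdot\frac{q(Su)}{q(u)}=\lambda_{TS}(u),
\]
so the section is multiplicative; the telescoping is the whole point of having a coboundary-type formula. Also $\lambda_{\id}=\mathbf 1$. It follows that $r_{T,S}\equiv 1$, so $d_s$ and $\cO_s$ vanish identically, and $[\cO_A]=0$. The semidirect product decomposition $C(A)\cong V_A\rtimes\Sp(V_A)$ then follows from the exact sequence \eqref{eq:clifford-ses-final} together with this splitting homomorphism.

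The main obstacle is the first step: making the odd-order square roots well defined and bimultiplicative, so that the symmetric part $\sigma$ of $\beta_A$ is genuinely a bicharacter and the alternating part is exactly $T$-invariant. The cleanest way to handle this is to work with $\tfrac12$ inside $V_A$ rather than with roots in $U(1)$. Thus $\omega_A^{1/2}(u,v)=\omega_A(\tfrac12u,v)$, and $\sigma(u,v)=\beta_A(\tfrac12u,v)\beta_A(\tfrac12v,u)$. Both are manifestly bicharacters, and $T$-invariance of $\omega_A(\tfrac12 u,v)$ follows from $T(\tfrac12u)=\tfrac12Tu$.
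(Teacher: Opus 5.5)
Your proposal is correct and essentially the paper's proof. Your $q(u)=\beta_A(\tfrac12 u,u)$ is exactly the paper's $q_A(u)=\beta_A(u,u)^{1/2}$, the section $\lambda_T(u)=q(Tu)/q(u)$ is the same, and the homomorphism check is the same telescoping; the only difference is that you verify the coboundary condition by writing $\beta_A=\sigma\cdot\omega_A^{1/2}$ with $\omega_A^{1/2}$ $T$-invariant, whereas the paper squares both sides and uses injectivity of squaring on $\mu_m$. Your displayed definition $\lambda_T(u)=q(u)/q(Tu)$ has the wrong orientation, since it yields the coboundary $\gamma_T^{-1}$ instead of $\gamma_T$; replace it with the choice $q(Tu)/q(u)$ that you actually adopt.
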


\begin{proof}
Let $m$ be the exponent of $V_A$, which is odd, and put
\[
\mu_m\coloneqq\{\zeta\in U(1)\mid \zeta^m=1\}.
\]
Every bicharacter on $V_A$ takes values in $\mu_m$.  Choose an integer
$r$ such that $2r\equiv1\pmod m$ and define
$\zeta^{1/2}\coloneqq\zeta^r$ for $\zeta\in\mu_m$.  This square-root
operation is the inverse of squaring on $\mu_m$ and is a group
homomorphism.

Define the function
\[
q_A(u)\coloneqq\beta_A(u,u)^{1/2}.
\]
For $T\in\Sp(V_A)$ set
\begin{equation}\label{eq:odd-section}
\lambda_T(u)
\coloneqq\frac{q_A(Tu)}{q_A(u)}
=\left(\frac{\beta_A(Tu,Tu)}{\beta_A(u,u)}\right)^{1/2}.
\end{equation}

Since $T$ preserves $\omega_A=\mathcal A(\beta_A)$, for all
$u,v\in V_A$ we have
\[
\frac{\beta_A(Tu,Tv)}{\beta_A(u,v)}
=
\frac{\beta_A(Tv,Tu)}{\beta_A(v,u)}.
\]
Bilinearity of $\beta_A$ and the identity
$q_A(x)^2=\beta_A(x,x)$ give
\begin{align*}
\left(
\frac{\lambda_T(u+v)}{\lambda_T(u)\lambda_T(v)}
\right)^2
&=
\frac{\beta_A(T(u+v),T(u+v))}
     {\beta_A(Tu,Tu)\beta_A(Tv,Tv)}
\frac{\beta_A(u,u)\beta_A(v,v)}
     {\beta_A(u+v,u+v)} \\
&=
\frac{\beta_A(Tu,Tv)\beta_A(Tv,Tu)}
     {\beta_A(u,v)\beta_A(v,u)} \\
&=
\left(
\frac{\beta_A(Tu,Tv)}{\beta_A(u,v)}
\right)^2.
\end{align*}
Both fractions inside the squares belong to $\mu_m$.  Since squaring
is injective on $\mu_m$, it follows that
\[
\frac{\lambda_T(u+v)}{\lambda_T(u)\lambda_T(v)}
=\frac{\beta_A(Tu,Tv)}{\beta_A(u,v)}.
\]
Hence $(T,\lambda_T)\in\Ps(A)$.  Moreover, for all $T,S$,
\[
\lambda_T(Su)\lambda_S(u)
=\frac{q_A(TSu)}{q_A(Su)}\frac{q_A(Su)}{q_A(u)}
=\lambda_{TS}(u).
\]
Therefore $T\mapsto(T,\lambda_T)$ is a homomorphic section, proving
the proposition.
\end{proof}

Combining Propositions~\ref{prop:odd_case_vanish}
and~\ref{prop:coprime-decomp}, we conclude that for any finite abelian
group, the splitting obstruction $[\cO_A]$ is determined entirely by
the $2$-primary component.
\subsection{Reduction to direct summands}

We now show that splitting descends to direct summands. Let
\[
A = B \oplus C
\]
be an arbitrary decomposition of finite abelian groups; no coprimality assumption is imposed. Then $V_A$ decomposes orthogonally as
\[
V_A = V_B \oplus V_C.
\]
Define the embeddings
\[
i\colon V_B\hookrightarrow V_A,\qquad i(v_B)=(v_B,0_{V_C}),
\]
and
\[
j\colon\Sp(V_B)\hookrightarrow \Sp(V_A),\qquad j(S)=S\oplus \id_{V_C}.
\]

Using Theorem~\ref{thm:clifford-structure}, we define a map lifting
$j$.  For $(S,\mu)\in \Ps(B)$, define
\[
\widetilde\mu\colon V_A\to U(1),
\qquad
\widetilde\mu(v_B,v_C)\coloneqq\mu(v_B),
\]
and set
\begin{equation}
\iota(S, \mu) \coloneqq (S \oplus \id_{V_C}, \widetilde\mu).
\end{equation}
Since
\[
\beta_A((u_B,u_C),(v_B,v_C))
=\beta_B(u_B,v_B)\beta_C(u_C,v_C),
\]
the function $\widetilde\mu$ satisfies the coboundary condition for
$\beta_A$; the $C$-component is trivial because the symplectic map
acts as the identity on $V_C$. Moreover, the twisted product is
preserved, and therefore $\iota$ is a group homomorphism.
It is injective, and on the kernel it satisfies
\[
\iota(\nu_B(v_B))=\nu_A(i(v_B)),
\]
because
$\omega_A((v_B,0_{V_C}),(u_B,u_C))=\omega_B(v_B,u_B)$.

The maps $i$, $j$, and $\iota$ yield a commutative diagram of exact
sequences (using the identification $C(A)=\Ps(A)$):
\begin{equation}
\begin{tikzcd}
1 \arrow[r] & V_B \arrow[r] \arrow[d, "i"] & C(B) \arrow[r] \arrow[d, "\iota"] & \Sp(V_B) \arrow[r] \arrow[d, "j"] & 1 \\
1 \arrow[r] & V_A \arrow[r] & C(A) \arrow[r] & \Sp(V_A) \arrow[r] & 1.
\end{tikzcd}
\end{equation}

\medskip

\begin{theorem}\label{thm:embedding}
Let $A = B \oplus C$ be a decomposition of finite abelian groups. If the Clifford extension for $A$ splits, then the Clifford extension for $B$ splits.
\end{theorem}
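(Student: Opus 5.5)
Given a splitting $\sigma_A\colon\Sp(V_A)\to C(A)$ of the extension for $A$, the plan is to produce a splitting for $B$ by restricting $\sigma_A$ along $j$ and projecting back into $\iota(C(B))$. First, compose: $\sigma_A\circ j\colon \Sp(V_B)\to C(A)=\Ps(A)$ is a homomorphism with $\sigma_A(j(S))=(S\oplus\id_{V_C},\Lambda_S)$ for some phase function $\Lambda_S$ on $V_A$ satisfying \eqref{eq:coboundary-condition}. This element need not lie in $\iota(C(B))$, since $\Lambda_S$ may depend on the $V_C$-coordinate. The difference $\Lambda_S\cdot\widetilde{\mu}^{-1}$, for any fixed lift $(S,\mu)\in\Ps(B)$, is a character of $V_A$, i.e.\ $\Lambda_S = \widetilde{\mu}\cdot\kappa_A(w)$ for some $w=(w_B,w_C)\in V_A$.

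The key step is to define the projection. Let $P\colon V_A\to V_B$ be the coordinate projection. Consider the subgroup $\pi^{-1}(j(\Sp(V_B)))\subset C(A)$; its elements are $(S\oplus\id,\Lambda)$, and I claim $\Lambda(u_B,u_C)=\Lambda(u_B,0)\,\Lambda(0,u_C)$ with $u_C\mapsto\Lambda(0,u_C)$ a character of $V_C$. Indeed, \eqref{eq:coboundary-condition} with the orthogonal decomposition $\beta_A=\beta_B\cdot\beta_C$ and $T=S\oplus\id$ gives $\Lambda(u+v)/(\Lambda(u)\Lambda(v))=\beta_B(Su_B,Sv_B)/\beta_B(u_B,v_B)$, which is trivial when $u\in V_B$, $v\in V_C$ or both in $V_C$. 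Hence the map
\[
\rho\colon (S\oplus\id,\Lambda)\longmapsto \bigl(S,\ \Lambda|_{V_B}\bigr)\in\Ps(B)
\]
is well defined (the restriction satisfies the coboundary condition for $\beta_B$). Then check that $\rho$ is a group homomorphism: under the twisted product, $(\Lambda^{S'\oplus\id}\cdot\Lambda')|_{V_B}=(\Lambda|_{V_B})^{S'}\cdot\Lambda'|_{V_B}$ because $S'\oplus\id$ preserves $V_B$. Moreover $\pi_B\circ\rho = j^{-1}\circ\pi_A$ on this subgroup, and $\rho\circ\iota=\id_{C(B)}$.

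Finally set $\sigma_B\coloneqq\rho\circ\sigma_A\circ j$. It is a composition of homomorphisms, and $\pi_B(\sigma_B(S))=S$, so it is a splitting for $B$. The main thing to verify carefully is the factorization claim for $\Lambda$ above, i.e.\ that the $V_C$-dependence is a genuine character that decouples from the $V_B$-part; this is where orthogonality of $\omega_A=\omega_B\oplus\omega_C$ and the product form of $\beta_A$ are used. An alternative, cohomological route would be to observe that $\rho$ gives a retraction of extensions so that $[\cO_B]=P_*j^*[\cO_A]$, whence vanishing of $[\cO_A]$ forces vanishing of $[\cO_B]$; I would mention this as a remark but use the direct homomorphism argument as the proof.
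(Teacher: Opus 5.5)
Your proof is correct and is essentially the paper's argument: the paper also restricts the phase function $\Lambda_S$ of $\sigma_A(j(S))=(S\oplus\id_{V_C},\Lambda_S)$ to $V_B\oplus\{0_{V_C}\}$. It then checks the coboundary condition for $\beta_B$ and the twisted product rule directly, which is exactly your map $\rho$ applied to $\sigma_A\circ j$. The factorization $\Lambda(u_B,u_C)=\Lambda(u_B,0)\Lambda(0,u_C)$ that you flag as the main point to verify is true but not needed, since well-definedness and multiplicativity of $\rho$ only use that $S\oplus\id_{V_C}$ preserves $V_B\oplus\{0_{V_C}\}$ and that $\beta_A$ restricts to $\beta_B$ there.
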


\begin{proof}
Assume that the extension for $A$ splits. Then there exists a normalized group homomorphism
\[
\sigma_A\colon \Sp(V_A) \to C(A)
\]
satisfying $\pi_A \circ \sigma_A = \id_{\Sp(V_A)}$.  We use
$\sigma_A$ to construct a splitting section for the Clifford extension
of $B$.

For $S \in \Sp(V_B)$, set
\[
\Sigma_S = \sigma_A(j(S)) \in C(A).
\]
Via the identification $C(A)=\Ps(A)$ from Theorem~\ref{thm:clifford-structure}, we may write
\begin{equation}
\Sigma_S = (S \oplus \id_{V_C}, \Lambda_S),
\end{equation}
where $\Lambda_S\colon V_A \to U(1)$ satisfies the coboundary condition
\begin{equation}
\frac{\Lambda_S(u+v)}{\Lambda_S(u)\Lambda_S(v)} = \frac{\beta_A((S\oplus \id_{V_C})u, (S\oplus \id_{V_C})v)}{\beta_A(u, v)}.
\end{equation}

We now restrict $\Lambda_S$ to the copy of $V_B$ inside $V_A$. Define
\begin{equation}
\lambda_S(u_B) \coloneqq \Lambda_S(u_B, 0_{V_C}).
\end{equation}
We verify that $\lambda_S$ satisfies the coboundary condition for $\beta_B$. For $u=(u_B,0_{V_C})$ and $v=(v_B,0_{V_C})$ in $V_B\oplus \{0_{V_C}\}$, the factorization of $\beta_A$ gives
\begin{equation}
\beta_A(u, v) = \beta_B(u_B, v_B) \cdot \beta_C(0_{V_C}, 0_{V_C}) = \beta_B(u_B, v_B),
\end{equation}
since $\beta_C(0_{V_C},0_{V_C})=1$. Moreover,
$(S \oplus \id_{V_C})(u_B, 0_{V_C}) = (Su_B, 0_{V_C})$, and therefore
\begin{equation}
\frac{\lambda_S(u_B + v_B)}{\lambda_S(u_B)\lambda_S(v_B)} = \frac{\beta_B(Su_B, Sv_B)}{\beta_B(u_B, v_B)}.
\end{equation}
Hence $(S, \lambda_S)$ is an element of $\Ps(B)$.

Define
\[
\sigma_B\colon\Sp(V_B)\to \Ps(B),\qquad \sigma_B(S)\coloneqq (S,\lambda_S).
\]
It remains to prove that $\sigma_B$ is a group homomorphism.

Let $S_1, S_2 \in \Sp(V_B)$. Since $\sigma_A$ is a homomorphism,
\begin{equation}
\Sigma_{S_1 S_2} = \Sigma_{S_1} \cdot \Sigma_{S_2}.
\end{equation}
Using the twisted product in $\Ps(A)$, we obtain
\begin{equation}
\Sigma_{S_1} \cdot \Sigma_{S_2} = ((S_1 S_2) \oplus \id_{V_C}, \, \Lambda_{S_1}^{S_2 \oplus \id_{V_C}} \cdot \Lambda_{S_2}),
\end{equation}
where $(\Lambda_{S_1}^{S_2 \oplus \id_{V_C}} \cdot \Lambda_{S_2})(u) = \Lambda_{S_1}((S_2 \oplus \id_{V_C})u) \cdot \Lambda_{S_2}(u)$.

Comparing with $\Sigma_{S_1 S_2} = ((S_1 S_2) \oplus \id_{V_C}, \Lambda_{S_1 S_2})$, we obtain
\begin{equation}
\Lambda_{S_1 S_2}(u) = \Lambda_{S_1}((S_2 \oplus \id_{V_C})u) \cdot \Lambda_{S_2}(u).
\end{equation}

Restricting to $u = (u_B, 0_{V_C}) \in V_B \oplus \{0_{V_C}\}$ gives
\begin{align}
\lambda_{S_1 S_2}(u_B) &= \Lambda_{S_1 S_2}(u_B, 0_{V_C}) \nonumber \\
&= \Lambda_{S_1}((S_2 \oplus \id_{V_C})(u_B, 0_{V_C})) \cdot \Lambda_{S_2}(u_B, 0_{V_C}) \nonumber \\
&= \Lambda_{S_1}(S_2 u_B, 0_{V_C}) \cdot \Lambda_{S_2}(u_B, 0_{V_C}) \nonumber \\
&= \lambda_{S_1}(S_2 u_B) \cdot \lambda_{S_2}(u_B).
\end{align}
The resulting identity is the twisted product rule in $\Ps(B)$:
\begin{equation}
(S_1, \lambda_{S_1}) \cdot (S_2, \lambda_{S_2}) = (S_1 S_2, \lambda_{S_1}^{S_2} \cdot \lambda_{S_2}) = (S_1 S_2, \lambda_{S_1 S_2}).
\end{equation}
Hence $\sigma_B(S_1 S_2) = \sigma_B(S_1) \cdot \sigma_B(S_2)$, and therefore $\sigma_B$ is a group homomorphism.

Finally, $\sigma_B$ is normalized because $\sigma_B(\id) = (\id, \lambda_{\id})$ and $\lambda_{\id}(u_B) = \Lambda_{\id}(u_B, 0_{V_C}) = 1$, using that $\sigma_A$ is normalized. Since $\pi_B(\sigma_B(S)) = S$ by construction, $\sigma_B$ is a splitting section for the Clifford extension for $B$.
\end{proof}

\section{Nonsplitting for \texorpdfstring{$\Z_{2^k}$}{Z/(2\string^k)}}\label{sec:nonsplit-cyclic}

For $A=\Z_N$, Korbel\'a\v{r} and Tolar proved that, when $N$ is even,
the Clifford extension splits if and only if $4\nmid N$~\cite{KT23}.
Their proof uses the explicit matrix model of the cyclic Clifford group
and its symplectic action~\cite{Appleby2005}.  For $N=2^k$ with
$k\geq2$, we prove nonsplitting in the pseudo-symplectic model of
Theorem~\ref{thm:clifford-structure}, using the relations $t^N=I$ and
$(st)^3=s^2$ for the matrices defined in
\eqref{eq:cyclic-generators}.

For $p\in\Z_N$, let
\[
\chi_p(b)\coloneqq \exp\!\left(\frac{2\pi \mathrm{i}}{N}pb\right),
\qquad b\in\Z_N.
\]
We identify $p$ with $\chi_p\in\widehat{\Z_N}$ and thereby identify
$\widehat{\Z_N}\cong \Z_N$.  We then write
\[
V_{\Z_N} \coloneqq \Z_N \oplus \Z_N,
\]
with elements $u = (a, p)$ and $v = (b, q)$. Throughout
Section~\ref{sec:nonsplit-cyclic}, every coordinate occurring in an
exponential or parity expression denotes its unique integer
representative in $\{0,\dots,N-1\}\subset \Z$. The Heisenberg 2-cocycle is
\begin{equation}
\beta_{\Z_N}(u, v) = \exp\!\left(\frac{2\pi \mathrm{i}}{N}\, pb\right),
\end{equation}
and we use Theorem~\ref{thm:clifford-structure} to identify $C(\Z_N)$ with pairs $(T, \lambda)$ satisfying the coboundary condition
\begin{equation}
\frac{\lambda(u + v)}{\lambda(u)\lambda(v)} = \frac{\beta_{\Z_N}(Tu, Tv)}{\beta_{\Z_N}(u, v)}.
\end{equation}
Under this identification,
\[
\omega_{\Z_N}((a,p),(b,q))
=\exp\!\left(\frac{2\pi \mathrm{i}}{N}(pb-qa)\right),
\]
and therefore
$\Sp(V_{\Z_N})=\Sp(2,\Z_N)=\SL(2,\Z_N)$.

We use the generators
\begin{equation}\label{eq:cyclic-generators}
t = \begin{pmatrix} 1 & 1 \\ 0 & 1 \end{pmatrix}, \qquad
s = \begin{pmatrix} 0 & -1 \\ 1 & 0 \end{pmatrix}.
\end{equation}
\begin{samepage}
Korbel\'a\v{r} and Tolar use $t$ together with the conjugate
lower-unipotent generator
\[
r=\begin{pmatrix}1&0\\-1&1\end{pmatrix}=sts^{-1}
\]
in their presentation of $\SL(2,\Z_N)$~\cite{KT23}.
\end{samepage}
The matrices $t$ and $s$ act on $V_{\Z_N}$ by
$t(a,p)=(a+p,p)$ and $s(a,p)=(-p,a)$. A splitting homomorphism
$\sigma\colon \SL(2, \Z_N) \to C(\Z_N)$ is determined by its values
$\tilde{t}=\sigma(t)$ and $\tilde{s}=\sigma(s)$.
\subsection{Explicit lifts of the generators}

For the generator $t$, we compute the ratio of cocycles directly using the action $t(a, p) = (a+p, p)$. For any $u=(a,p)$ and $v=(b,q)$,
\begin{equation}
\frac{\beta_{\Z_N}(tu, tv)}{\beta_{\Z_N}(u, v)} = \frac{\exp(\frac{2\pi \mathrm{i}}{N} p(b+q))}{\exp(\frac{2\pi \mathrm{i}}{N} pb)} = \exp\!\left(\frac{2\pi \mathrm{i}}{N}\, pq\right).
\end{equation}
A particular solution is $\lambda_t^{(0)}(a, p) = \exp(\pi \mathrm{i} p^2/N)$, since
\begin{equation}
\frac{\lambda_t^{(0)}(u + v)}{\lambda_t^{(0)}(u)\lambda_t^{(0)}(v)} = \exp\!\left(\frac{\pi \mathrm{i}}{N}\bigl((p + q)^2 - p^2 - q^2\bigr)\right) = \exp\!\left(\frac{2\pi \mathrm{i}}{N}\, pq\right).
\end{equation}
This function is well defined on $\Z_N$ because $N$ is even: replacing
$p$ by $p+N$ changes its value by
$\exp(2\pi \mathrm{i} p+\pi \mathrm{i}N)=1$.  The same observation justifies the use of
unreduced integer representatives in the displayed calculation.
Any other solution differs from $\lambda_t^{(0)}$ by a linear character of $V_{\Z_N}$. Parametrizing these characters as $\eta_{(x,y)}(a, p) = \exp(2\pi \mathrm{i}(xa + yp)/N)$, the general lift of $t$ takes the form
\begin{equation}\label{eq:lift-t}
\tilde{t} = (t, \lambda_t), \qquad \lambda_t(a, p) = \exp\!\left(\frac{\pi \mathrm{i}}{N} p^2 + \frac{2\pi \mathrm{i}}{N}(xa + yp)\right),
\end{equation}
determined by a unique pair $(x, y) \in \Z_N^2$.

For the generator $s$, the coboundary ratio is
\begin{equation}
\frac{\beta_{\Z_N}(su, sv)}{\beta_{\Z_N}(u, v)} = \frac{\exp(\frac{2\pi \mathrm{i}}{N} a(-q))}{\exp(\frac{2\pi \mathrm{i}}{N} pb)} = \exp\!\left(-\frac{2\pi \mathrm{i}}{N} (aq + pb)\right).
\end{equation}
A particular solution is $\lambda_s^{(0)}(a, p) = \exp(-2\pi \mathrm{i}\, ap/N)$, since
\[
\frac{\lambda_s^{(0)}(u+v)}{\lambda_s^{(0)}(u)\lambda_s^{(0)}(v)} = \exp\!\left(-\frac{2\pi \mathrm{i}}{N}\bigl((a+b)(p+q) - ap - bq\bigr)\right) = \exp\!\left(-\frac{2\pi \mathrm{i}}{N}(aq+bp)\right).
\]
Hence every lift of $s$ is of the form
\begin{equation}\label{eq:lift-s}
\tilde{s} = (s, \lambda_s), \qquad \lambda_s(a, p) = \exp\!\left(-\frac{2\pi \mathrm{i}}{N} ap + \frac{2\pi \mathrm{i}}{N}(za + wp)\right),
\end{equation}
for a pair $(z, w) \in \Z_N^2$.

\subsection{Incompatible lifting constraints}

Assume that a splitting exists.  Then the lifts of $t$ and $s$ satisfy
the relations $t^N=I$ and $(st)^3=s^2$.  The next two lemmas determine
the resulting restrictions on the parameter $x$ in
\eqref{eq:lift-t}.

\begin{lemma}\label{lem:parity-constraint}
Let $N=2^k$ with $k \geq 1$. If $\tilde{t}^N = (\id, \mathbf{1})$, then the parameter $x$ in \eqref{eq:lift-t} must be odd.
\end{lemma}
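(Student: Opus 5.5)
The plan is to compute $\tilde t^{N}$ directly in the pseudo-symplectic model of Theorem~\ref{thm:clifford-structure}, using the general lift \eqref{eq:lift-t}, and then evaluate the resulting phase function at the single point $(a,p)=(0,1)$.

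\emph{Step 1: the $n$-th power.} Iterating the twisted product \eqref{eq:twisted-product} gives, by induction on $n$,
\[
(t,\lambda_t)^n=\Bigl(t^n,\; u\longmapsto \prod_{j=0}^{n-1}\lambda_t(t^j u)\Bigr).
\]
Since $t^N=I$ in $\SL(2,\Z_N)$, the hypothesis $\tilde t^N=(\id,\mathbf 1)$ is equivalent to
\[
\prod_{j=0}^{N-1}\lambda_t(t^ju)=1\quad\text{for all } u\in V_{\Z_N}.
\]

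\emph{Step 2: evaluate the product.} We have $t^j(a,p)=(a+jp,p)$. The formula \eqref{eq:lift-t} may be evaluated on unreduced integer representatives, because $N$ is even; this is the remark already made after the definition of $\lambda_t^{(0)}$, and the linear-character factor is manifestly $N$-periodic. So we substitute $a+jp$ directly and sum the exponents:
\[
\sum_{j=0}^{N-1}\Bigl(\frac{\pi \mathrm i}{N}p^2+\frac{2\pi \mathrm i}{N}\bigl(x(a+jp)+yp\bigr)\Bigr)
=\pi \mathrm i\,p^2+2\pi \mathrm i\,(xa+yp)+\frac{2\pi \mathrm i}{N}\,xp\,\frac{N(N-1)}{2}.
\]
Modulo $2\pi \mathrm i\Z$, this reduces to $\pi \mathrm i\,p\bigl(p+x(N-1)\bigr)$. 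Thus the condition from Step~1 becomes
\[
\exp\bigl(\pi \mathrm i\,p\,(p+x(N-1))\bigr)=1\quad\text{for all } p.
\]

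\emph{Step 3: specialize.} Take $p=1$. Then $1+x(N-1)$ must be even. Since $N=2^k$ is even, $N-1$ is odd, so $x(N-1)\equiv x\pmod 2$. Hence $x$ must be odd. Changing the representative $x\mapsto x+N$ does not affect parity because $N$ is even, so the conclusion is well defined for $x\in\Z_N$.

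The only delicate point is Step~2. One must justify working with unreduced representatives $a+jp$ inside $\lambda_t$, and keep the term $xp\,N(N-1)/2$, which is exactly where the parity of $x$ enters. Both concerns are settled by the evenness of $N$, as already observed in the text.
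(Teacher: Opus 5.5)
Your proof is correct and follows essentially the same route as the paper's. You iterate the twisted product to obtain $\prod_{j=0}^{N-1}\lambda_t(t^ju)$, sum the exponents using $\sum_j j=N(N-1)/2$ to get the phase $(-1)^{p(p+x(N-1))}=(-1)^{p(1+x)}$, and use that $N-1$ is odd; specializing to $p=1$ is just the direction of the paper's ``for all $p$'' condition that the lemma needs.
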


\begin{proof}
Using the product law $(T, \lambda) \cdot (S, \mu) = (TS, \lambda^S \mu)$ iteratively, the phase of $\tilde{t}^N$ is given by
\begin{equation}
\Lambda_N(u) = \prod_{j=0}^{N-1} \lambda_t(t^j u).
\end{equation}
Since $t^j(a, p) = (a + jp, p)$, the second component $p$ remains constant. Substituting $\lambda_t$ from \eqref{eq:lift-t} gives
\begin{equation}
\Lambda_N(a, p) = \prod_{j=0}^{N-1} \exp\!\left(\frac{\pi \mathrm{i}}{N} p^2 + \frac{2\pi \mathrm{i}}{N}(x(a+jp) + yp)\right).
\end{equation}
We separate the product into quadratic and linear parts. The quadratic term is constant in $j$:
\[
\prod_{j=0}^{N-1} \exp\!\left(\frac{\pi \mathrm{i}}{N} p^2\right) = \exp(\pi \mathrm{i} p^2) = (-1)^{p^2} = (-1)^p,
\]
where we used that $p^2 \equiv p \pmod 2$ for integers. The linear term involves a sum over $j$:
\[
\sum_{j=0}^{N-1} \frac{2\pi \mathrm{i}}{N}(xa + xjp + yp) = 2\pi \mathrm{i} (xa + yp) + \frac{2\pi \mathrm{i}}{N} xp \sum_{j=0}^{N-1} j.
\]
Using $\sum_{j=0}^{N-1} j = \frac{N(N-1)}{2}$, the second term becomes $\pi \mathrm{i} xp (N-1)$. Since $N$ is even, $N-1$ is odd; therefore $\exp(\pi \mathrm{i} xp(N-1)) = (-1)^{xp}$. The term $\exp(2\pi \mathrm{i} (xa+yp)) = 1$.
Thus the total phase is
\begin{equation}
\Lambda_N(a, p) = (-1)^p (-1)^{xp} = (-1)^{p(1+x)}.
\end{equation}
For $\tilde{t}^N = (\id, \mathbf{1})$, we require $\Lambda_N(a, p) = 1$ for all $p$. This holds if and only if $1+x$ is even, i.e., $x \equiv 1 \pmod 2$.
\end{proof}

\begin{lemma}\label{lem:modular-constraint}
Let $N=2^k$ with $k \geq 1$. Choose lifts $\tilde t=(t,\lambda_t)$ and $\tilde s=(s,\lambda_s)$ as in \eqref{eq:lift-t}--\eqref{eq:lift-s} with parameters $(x,y)$ and $(z,w)$, respectively.
If the modular relation $(\tilde s\tilde t)^3=\tilde s^{\,2}$ holds in the Clifford group, then
\[
2x\equiv 0\pmod N.
\]
\end{lemma}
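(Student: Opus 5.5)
The plan is to write both sides of $(\tilde s\tilde t)^3=\tilde s^{\,2}$ explicitly in the pseudo-symplectic model of Theorem~\ref{thm:clifford-structure} and compare phase functions. On the symplectic side, $st$ acts by $st(a,p)=s(a+p,p)=(-p,a+p)$. Its matrix $\begin{pmatrix}0&-1\\1&1\end{pmatrix}$ satisfies $(st)^3=-I=s^2$. So both sides lift $-I$, and the relation reduces to an equality of phase functions on $V_{\Z_N}$. Moreover, both phase functions solve the coboundary condition for the same map $-I$, so their quotient is automatically a linear character of $V_{\Z_N}$. It therefore suffices to compare them at the two points $u=(1,0)$ and $u=(0,1)$, which yields two congruences modulo $N$ in the parameters $x,y,z,w$.

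For the right-hand side, the twisted product \eqref{eq:twisted-product} gives the phase $\lambda_s(su)\lambda_s(u)$. Using \eqref{eq:lift-s} and $s(a,p)=(-p,a)$, the quadratic parts cancel. We obtain
\[
\lambda_s(su)\lambda_s(u)=\exp\!\Bigl(\tfrac{2\pi\mathrm i}{N}\bigl((z+w)a+(w-z)p\bigr)\Bigr),
\]
which is a pure character. For the left-hand side, first put $\nu(u)\coloneqq\lambda_s(tu)\lambda_t(u)$, the phase of $\tilde s\tilde t$. Then $(\tilde s\tilde t)^3$ has phase $\nu(g^2u)\,\nu(gu)\,\nu(u)$, where $g=st$, $gu=(-p,a+p)$ and $g^2u=(-a-p,a)$. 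I will expand $\nu$ as the sum of a quadratic part and a linear part. The quadratic part comes from $\exp(\pi\mathrm i p^2/N)$ and $\exp(-2\pi\mathrm i\,ap/N)$ evaluated at $tu=(a+p,p)$. The linear part is $\frac{2\pi\mathrm i}{N}\bigl(x a+y p+z(a+p)+w p\bigr)$. Summing the linear part over the orbit $u,gu,g^2u$ gives a linear form in $(a,p)$. In it the $z,w$ contributions should combine into the same expression as on the right-hand side, up to terms that cancel, and what remains involves $x$ and $y$.

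Next I evaluate at the two test points. At $u=(1,0)$ the orbit is $(1,0),(0,1),(-1,1)$. At $u=(0,1)$ the orbit is $(0,1),(-1,1),(-1,0)$. At each orbit point I compute the quadratic contribution explicitly, using integer representatives in $\{0,\dots,N-1\}$ as the section prescribes. Equating with the right-hand side gives two congruences. I expect the $z,w$ terms to cancel and the quadratic contributions to supply fixed constants. Eliminating $y$ between the two congruences (or reading off the one where $y$ drops out) should then give $2x\equiv0\pmod N$.

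The main obstacle is the bookkeeping of the half-integer exponent $\exp(\pi\mathrm i p^2/N)$. This factor is well defined only because $N$ is even, and at orbit points such as $(-1,1)$ and $(-a-p,a)$ one must be consistent about representatives; a sign slip there would change the resulting constant. To guard against this, I will first check that the $z,w$ terms cancel identically as polynomial expressions in $(a,p)$, before specializing. I will then treat the quadratic remainder as a function whose quotient by the character from $\tilde s^{\,2}$ must be a character, and verify the final congruence in the smallest case $N=4$.
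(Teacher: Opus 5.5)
Your plan is correct and is essentially the paper's argument: the paper also splits each phase into a reference quadratic part and a linear-character part. The quadratic part's product over the orbit of $g=st$ is identically $1$, so $\tilde s_0,\tilde t_0$ satisfy the relation exactly; the paper then handles the character part via $\xi_v\widetilde H=\widetilde H\xi_{H^Tv}$ instead of evaluating at test points. Two corrections to your expectations: the $z,w$ terms do not cancel, since the test points $(1,0)$ and $(0,1)$ give $z+w+2y\equiv0$ and $z+w+2y-2x\equiv0\pmod N$, whose difference is $2x\equiv0$; and the representative issue is moot, because $\exp(\pi\mathrm i p^2/N)$ is well defined on $\Z_N$ for even $N$.
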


\begin{proof}
We organize the computation in two steps: first we verify the relation for a convenient pair of reference lifts, and then we determine how the relation changes after twisting by arbitrary linear characters.

\smallskip
\noindent\textbf{Step 1: The reference lifts satisfy $(\tilde s_0\tilde t_0)^3=\tilde s_0^{\,2}$.}
Fix the particular solutions
\[
\lambda_t^{(0)}(a,p)\coloneqq\exp\!\Big(\frac{\pi \mathrm{i}}{N}p^2\Big),\qquad
\lambda_s^{(0)}(a,p)\coloneqq\exp\!\Big(-\frac{2\pi \mathrm{i}}{N}ap\Big),
\]
and set $\tilde t_0\coloneqq(t,\lambda_t^{(0)})$,
$\tilde s_0\coloneqq(s,\lambda_s^{(0)})$.

First,
\[
\tilde s_0^{\,2}=(s^2,(\lambda_s^{(0)})^{s}\lambda_s^{(0)})=(-I,\mathbf{1}),
\]
because $(\lambda_s^{(0)})^{s}\lambda_s^{(0)}\equiv 1$. Set
$\tilde g_0\coloneqq\tilde s_0\tilde t_0=(st,\lambda_{g_0})$, where
\[
\lambda_{g_0}(a,p)=(\lambda_s^{(0)})^{t}(a,p)\lambda_t^{(0)}(a,p)
=\exp\!\Big(-\frac{2\pi \mathrm{i}}{N}(a+p)p\Big)\exp\!\Big(\frac{\pi \mathrm{i}}{N}p^2\Big)
=\exp\!\Big(-\frac{2\pi \mathrm{i}}{N}ap\Big)\exp\!\Big(-\frac{\pi \mathrm{i}}{N}p^2\Big).
\]
Since $g\coloneqq st$ satisfies $g^3=-I=s^2$ in $\SL(2,\Z_N)$, it remains to compare the phase functions.  The twisted product gives
\[
(\tilde s_0\tilde t_0)^3=\tilde g_0^{\,3}=(g^3,\Lambda_0),\qquad
\Lambda_0(u)=\lambda_{g_0}(g^2u)\lambda_{g_0}(gu)\lambda_{g_0}(u),
\]
Since $g(a,p)=(-p,a+p)$ and $g^2(a,p)=(-a-p,a)$, direct substitution
gives
\[
\Lambda_0(a,p)
=\exp\!\left(
\frac{\pi \mathrm{i}}{N}
\bigl((a^2+2ap)+(p^2-a^2)+(-2ap-p^2)\bigr)
\right)
=1.
\]
Therefore
\[
(\tilde s_0\tilde t_0)^3=(-I,\mathbf{1})=\tilde s_0^{\,2}.
\]

\smallskip
\noindent\textbf{Step 2: Reduction to a linear-character computation.}
We now compare arbitrary lifts with the reference lifts from Step~1.
Let $\xi_v\coloneqq(\id,\eta_v)$, where
\[
\eta_v(u)\coloneqq\exp\!\left(\frac{2\pi \mathrm{i}}{N}\langle v,u\rangle\right),
\qquad
\langle v,u\rangle \coloneqq v_1u_1+v_2u_2 \pmod N
\]
for $v=(v_1,v_2)$ and $u=(u_1,u_2)$ in $\Z_N^2$.
All vector and matrix operations in the remainder of the proof are
performed over $\Z_N$.
For any lift $\widetilde H=(H,\lambda)$, the twisted product
gives
\begin{equation}\label{eq:char-conj}
\xi_v\widetilde H=\widetilde H\xi_{H^Tv}.
\end{equation}

Write the general lifts as
\[
\tilde t=\tilde t_0\xi_{v_t},\qquad
\tilde s=\tilde s_0\xi_{v_s},
\]
where $v_t=\binom{x}{y}$ and $v_s=\binom{z}{w}$.
Using \eqref{eq:char-conj}, we obtain
\[
\tilde s\tilde t=\tilde g_0\xi_r,
\qquad
r=t^Tv_s+v_t
=\begin{pmatrix}1&0\\1&1\end{pmatrix}\binom{z}{w}+\binom{x}{y}
=\binom{x+z}{z+w+y}.
\]
Set $g\coloneqq st$; then $g^T=\begin{pmatrix}0&1\\-1&1\end{pmatrix}$.  Hence
\[
(\tilde s\tilde t)^3
=\tilde g_0^3\xi_{((g^T)^2+g^T+I)r},
\qquad
\tilde s^2=\tilde s_0^2\xi_{(s^T+I)v_s}.
\]
By Step~1, $\tilde g_0^3=\tilde s_0^2$. Since
$v\mapsto\eta_v$ is injective, the modular relation implies
\[
\bigl((g^T)^2+g^T+I\bigr)r=(s^T+I)v_s.
\]
The two sides are
\[
\begin{pmatrix}0&2\\-2&2\end{pmatrix}
\binom{x+z}{z+w+y}
=\binom{2(z+w+y)}{2(w+y-x)},
\qquad
\begin{pmatrix}1&1\\-1&1\end{pmatrix}
\binom{z}{w}
=\binom{z+w}{w-z}.
\]
Equating their components gives
\[
z+w+2y\equiv0\pmod N,
\qquad
z+w+2y-2x\equiv0\pmod N.
\]
Subtracting the two congruences yields
\[
2x\equiv0\pmod N,
\]
as claimed.
\end{proof}

\begin{theorem}\label{thm:nonsplit-cyclic}
Let $N=2^k$ with $k\ge 2$. The Clifford extension
\[
1\longrightarrow V_{\Z_N}\longrightarrow C(\Z_N)\longrightarrow \SL(2,\Z_N)\longrightarrow 1
\]
does not split.
\end{theorem}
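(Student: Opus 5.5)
Suppose, for a contradiction, that a splitting homomorphism $\sigma\colon\SL(2,\Z_N)\to C(\Z_N)$ exists, and set $\tilde t=\sigma(t)$ and $\tilde s=\sigma(s)$. By \eqref{eq:lift-t} and \eqref{eq:lift-s}, these lifts are described by parameters $(x,y)$ and $(z,w)$ in $\Z_N^2$. A homomorphism preserves every relation among its arguments. In $\SL(2,\Z_N)$ we have $t^N=I$, and as noted in Step~1 of Lemma~\ref{lem:modular-constraint}, $(st)^3=-I=s^2$. Hence
\[
\tilde t^{\,N}=\sigma(t^N)=(\id,\mathbf 1),
\qquad
(\tilde s\tilde t)^3=\sigma((st)^3)=\sigma(s^2)=\tilde s^{\,2}.
\]

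We now apply the two lemmas to these relations. Lemma~\ref{lem:parity-constraint} gives that $x$ is odd, where $x$ is taken as its integer representative in $\{0,\dots,N-1\}$; since $N$ is even, parity is well defined on $\Z_N$. Lemma~\ref{lem:modular-constraint} gives $2x\equiv0\pmod N$. When $N=2^k$ with $k\ge2$, this congruence forces $x\equiv0\pmod{2^{k-1}}$, and $2^{k-1}$ is even. So $x$ is even, contradicting the previous conclusion. Therefore no splitting exists.

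The key step is checking that both relations really hold in $\SL(2,\Z_N)$, so that the lemmas apply to any homomorphic section. For the first, $t^N=\begin{pmatrix}1&N\\0&1\end{pmatrix}=I$. For the second, with $g=st$ and $g(a,p)=(-p,a+p)$, we get $g^3(a,p)=(-a,-p)$, so $g^3=-I=s^2$. The remaining point is the role of $k\ge2$. For $k=1$ the congruence $2x\equiv0\pmod 2$ is automatic and gives no constraint. This matches the fact that $\Z_2$ splits, so the hypothesis cannot be dropped.
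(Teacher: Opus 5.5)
Your proposal is correct and matches the paper's proof. Both assume a homomorphic section, apply Lemma~\ref{lem:parity-constraint} to $\tilde t^{\,N}=(\id,\mathbf 1)$ to get $x$ odd, apply Lemma~\ref{lem:modular-constraint} to $(\tilde s\tilde t)^3=\tilde s^{\,2}$ to get $2x\equiv 0\pmod{2^k}$, and note that for $k\ge 2$ this forces $x$ even; your explicit check that $t^N=I$ and $(st)^3=-I=s^2$ hold in $\SL(2,\Z_N)$ simply spells out what the paper leaves implicit.
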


\begin{proof}
Suppose the extension splits via a homomorphism $\sigma\colon \SL(2,\Z_N) \to C(\Z_N)$. Let $\tilde{t} = \sigma(t)$ and $\tilde{s} = \sigma(s)$ be the images of the generators, with $\tilde{t} = (t, \lambda_t)$ parametrized by $(x,y) \in \Z_N^2$ as in \eqref{eq:lift-t}.

Since $\sigma$ is a homomorphism, its values on $s$ and $t$ must
preserve the relations $t^N=I$ and $(st)^3=s^2$.
Lemmas~\ref{lem:parity-constraint} and~\ref{lem:modular-constraint}
impose incompatible conditions on the parameter $x$:
\begin{itemize}
\item The relation $t^N = I$ implies $\tilde{t}^N = (\id, \mathbf{1})$, hence by Lemma~\ref{lem:parity-constraint}, $x$ must be odd.
\item The relation $(st)^3 = s^2$ implies $(\tilde{s}\tilde{t})^3 = \tilde{s}^2$, thus by Lemma~\ref{lem:modular-constraint}, $2x \equiv 0 \pmod{N}$.
\end{itemize}

For $N = 2^k$ with $k \geq 2$, the condition $2x \equiv 0 \pmod{N}$ implies $x \in \{0, N/2\}$. Since $N/2 = 2^{k-1}$ is even for $k \geq 2$, both possible values of $x$ are even. This contradicts the requirement that $x$ be odd.

Therefore, no splitting homomorphism exists.
\end{proof}

\begin{remark}
No order condition on the lift $\tilde{s}$ is used in the proof.
\end{remark}
\section{Symplectic obstruction in the elementary abelian case}\label{sec:symplectic-obstruction}

For $A=(\Z_2)^n$ with $n\geq2$, we identify the Clifford extension with
the symplectic automorphism extension of the central product studied by
Griess~\cite{Griess1973}.  Its middle term consists of the
automorphisms that fix the central copy of $\Z_4$ pointwise and is
distinct from the full automorphism group of the extraspecial factor.
Griess' results then imply that the Clifford extension does not split.

\subsection{The Pauli group and its automorphisms}

Fix $A = (\Z_2)^n$ with $V_A = A \oplus \widehat{A} \cong (\F_2)^{2n}$.
Let
\[
\overline{\omega}_A\colon V_A\times V_A\longrightarrow\F_2
\]
be the alternating bilinear form determined by
$\omega_A(u,v)=(-1)^{\overline{\omega}_A(u,v)}$. Thus $\Sp(V_A)$ is
the symplectic group of $(V_A,\overline{\omega}_A)$; after choosing a
symplectic basis,
\[
\Sp(V_A)\cong\Sp(2n,\F_2).
\]
Set $\mu_4\coloneqq\langle \mathrm{i}\rangle=\{1,\mathrm{i},-1,-\mathrm{i}\}$.  The $n$-qubit
\emph{Pauli group} $P_n$ is the finite subgroup of $U(\HH)$ generated
by the Weyl operators and the scalar $\mathrm{i}I$:
\begin{equation}\label{eq:pauli-def}
P_n \coloneqq \langle\, W_u,\, \mathrm{i}I \mid u \in V_A \,\rangle.
\end{equation}
Explicitly, $P_n = \{ z W_u \mid z \in \mu_4,\, u \in V_A \}$ has
order $2^{2n+2}$. Its center is
\[
Z(P_n)=Y\coloneqq\mu_4 I=\langle \mathrm{i}I\rangle\cong\Z_4,
\]
and the map
\[
P_n/Y\longrightarrow V_A,\qquad zW_uY\longmapsto u,
\]
is an isomorphism.

To identify the extraspecial factor $E_n$ in $P_n$, choose a basis
$e_1,\dots,e_n$ of $A$ and its dual basis
$e_1^\vee,\dots,e_n^\vee$ of $\widehat A$.  Set
\[
X_j\coloneqq W_{(e_j,0)},
\qquad
Z_j\coloneqq W_{(0,e_j^\vee)}.
\]
The subgroup $D_j=\langle X_j,Z_j\rangle$ is isomorphic to $D_8$, the
dihedral group of order $8$:
$X_j$ and $Z_j$ are involutions and their commutator is $-I$.
Subgroups with different indices commute, and their common center is
$\langle-I\rangle$.  Hence
\[
E_n\coloneqq\langle\, W_u,\,-I \mid u\in V_A\,\rangle
=D_1\circ\cdots\circ D_n,
\]
where $\circ$ denotes the central product obtained by amalgamating the
common central subgroup $\langle-I\rangle$.  The group $E_n$ is an
extraspecial $2$-group of order $2^{2n+1}$, with center and
commutator subgroup both equal to $\langle-I\rangle$.  It is of plus
(positive) type and is exactly the group denoted $E_n$ by
Griess~\cite[pp.~405--406]{Griess1973}.

From the generators in \eqref{eq:pauli-def} we have
\[
P_n=E_nY,
\qquad
E_n\cap Y=\langle-I\rangle,
\qquad
[E_n,Y]=1.
\]
Thus $P_n$ is the internal central product
\[
P_n=E_n\circ Y,
\]
formed by amalgamating the common subgroup $\langle-I\rangle$.
Griess' results apply to the central product $P_n=E_n\circ Y$.

We use the subgroup considered by Griess~\cite{Griess1973}:
\[
\Aut_Y(P_n) \coloneqq \{ \varphi \in \Aut(P_n) \mid \varphi|_Y = \id_Y \},
\]
of automorphisms that fix $Y$ pointwise. In Griess' notation, this
group is denoted by $A(E_n \circ Y)$. Every such automorphism induces
a well-defined automorphism of the quotient
\[
P_n/Y \cong V_A.
\]
Because $\varphi$ fixes the center, it preserves the commutator form
$\overline{\omega}_A$, and hence the induced map on $V_A$ lies in
$\Sp(V_A)\cong\Sp(2n,\F_2)$. Griess identifies $V_A$ with
$\Inn(P_n)$ via the map~\cite[pp.~403--404]{Griess1973}
\[
v\longmapsto\operatorname{Ad}(W_v),\qquad
\operatorname{Ad}(W_v)(W_u)=\omega_A(v,u)W_u.
\]
The second exact sequence in Griess' Corollary~2 therefore takes the
form
\begin{equation}\label{eq:autZ-ses}
1 \longrightarrow V_A \longrightarrow \Aut_Y(P_n) \xrightarrow{\;\rho\;} \Sp(V_A) \longrightarrow 1,
\end{equation}
as stated in~\cite[Corollary~2, p.~407]{Griess1973}.

\subsection{Identification with the Clifford extension}

We compare the middle terms of \eqref{eq:autZ-ses} and the Clifford
extension through their descriptions by pairs $(T,\lambda)$.

Let $T \in \Sp(V_A)$. An automorphism $\varphi \in \Aut_Y(P_n)$ lifting $T$ must satisfy $\varphi(W_u) = \lambda(u) W_{Tu}$ for some function $\lambda\colon V_A \to \mu_4$. Imposing that $\varphi$ preserves multiplication yields the coboundary condition
\begin{equation}\label{eq:aut-cob}
\frac{\lambda(u+v)}{\lambda(u)\lambda(v)} = \frac{\beta_A(Tu, Tv)}{\beta_A(u, v)}.
\end{equation}
Condition~\eqref{eq:aut-cob} agrees with
\eqref{eq:coboundary-condition} in Theorem~\ref{thm:clifford-structure},
except that $\lambda$ is initially required to take values in $\mu_4$
rather than $U(1)$. For elementary abelian $2$-groups the right-hand
side of~\eqref{eq:aut-cob} lies in $\{\pm1\}$. Moreover, any
$U(1)$-valued solution of \eqref{eq:coboundary-condition} has
$\lambda(0)=1$, and substituting $u=v$ gives
$\lambda(u)^2\in\{\pm1\}$. Hence every such solution has values in
$\mu_4$, so the two conditions coincide.

\begin{proposition}\label{prop:ses-ident}
There is an isomorphism of short exact sequences
\[
\begin{tikzcd}[column sep=large]
1 \arrow[r] & V_A \arrow[r] \arrow[d, equals] & C(A) \arrow[r, "\pi"] \arrow[d, "\Theta", "\cong"'] & \Sp(V_A) \arrow[r] \arrow[d, equals] & 1 \\
1 \arrow[r] & V_A \arrow[r] & \Aut_Y(P_n) \arrow[r, "\rho"] & \Sp(V_A) \arrow[r] & 1
\end{tikzcd}
\]
identifying both middle terms with pairs $(T, \lambda)$ satisfying~\eqref{eq:aut-cob}.
\end{proposition}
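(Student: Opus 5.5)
The plan is to define $\Theta$ directly on pairs. By Theorem~\ref{thm:clifford-structure} we identify $C(A)$ with $\Ps(A)$. By the remark preceding the proposition, every $(T,\lambda)\in\Ps(A)$ has $\lambda$ with values in $\mu_4$. We set
\[
\Theta(T,\lambda)\coloneqq\varphi_{T,\lambda},\qquad \varphi_{T,\lambda}(zW_u)\coloneqq z\lambda(u)W_{Tu}\quad(z\in\mu_4,\ u\in V_A).
\]
Equivalently, $\varphi_{T,\lambda}$ is conjugation by a unitary representative $U$ of $\Phi(T,\lambda)$, restricted to $P_n$. This second description makes several properties immediate. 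The map $\varphi_{T,\lambda}$ is an automorphism of $P_n$, since it maps $P_n$ into $P_n$ because $\lambda(u)\in\mu_4$, and it is bijective with inverse given by $(T,\lambda)^{-1}$. It fixes $Y=\mu_4I$ pointwise, since $\lambda(0)=1$. Finally, $\Theta$ is a homomorphism, because conjugation by $UV$ is the composite of the two conjugations. Alternatively, one can check the homomorphism property directly with the twisted product \eqref{eq:twisted-product}: $\varphi_{T,\lambda}\varphi_{S,\mu}(W_u)=\mu(u)\lambda(Su)W_{TSu}$.

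Next we show $\Theta$ is bijective. For injectivity, $\varphi_{T,\lambda}=\id$ forces $W_{Tu}\in\mu_4W_u$, so $T=\id$ and then $\lambda=\mathbf 1$. For surjectivity, take $\varphi\in\Aut_Y(P_n)$. As explained before the proposition, $\varphi$ induces some $T\in\Sp(V_A)$ on $P_n/Y\cong V_A$, and $\varphi(W_u)=\lambda(u)W_{Tu}$ with $\lambda$ satisfying \eqref{eq:aut-cob}. That condition coincides with \eqref{eq:coboundary-condition}, so $(T,\lambda)\in\Ps(A)$ and $\varphi=\Theta(T,\lambda)$, using that $\varphi$ fixes scalars in $Y$. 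The same reasoning also shows that the right square commutes: $\rho(\Theta(T,\lambda))=T=\pi(T,\lambda)$.

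The main point requiring care is the left square, namely that the two inclusions of $V_A$ agree. On the Clifford side, $\nu(v)=(\id,\kappa_A(v))$ maps to the automorphism $W_u\mapsto\omega_A(v,u)W_u$. On Griess' side, $v$ corresponds to $\operatorname{Ad}(W_v)$, and since $W_vW_uW_v^{-1}=\omega_A(v,u)W_u$ by \eqref{eq:weyl-commutation}, this is the same automorphism. So $\Theta\circ\nu$ equals Griess' inclusion, and both left and right vertical maps are identities.

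The remaining subtlety is one of conventions: Griess' identification of $V_A$ with $\Inn(P_n)$ must use the same sign and ordering of $\omega_A$. Over $\F_2$ this is harmless, because $\omega_A(v,u)=\omega_A(u,v)^{-1}=\omega_A(u,v)$, as the form takes values $\pm1$. Once the diagram commutes, $\Theta$ is an isomorphism of extensions; alternatively, the short five lemma gives bijectivity of $\Theta$ from commutativity alone, which serves as a cross-check on the direct argument above.
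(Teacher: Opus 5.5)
Your proposal is correct and follows essentially the same route as the paper. You use the same map $\Theta(T,\lambda)(zW_u)=z\lambda(u)W_{Tu}$ and the same inverse construction from $\varphi\in\Aut_Y(P_n)$, and you verify the left square the same way, via $\operatorname{Ad}(W_v)(W_u)=\omega_A(v,u)W_u=\kappa_A(v)(u)\,W_u$. Your description of $\Theta(T,\lambda)$ as conjugation by a unitary representative of $\Phi(T,\lambda)$ is only a convenient shortcut for the automorphism and homomorphism checks, which the paper instead obtains directly from \eqref{eq:aut-cob} and the twisted product.
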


\begin{proof}
Let $(T,\lambda)\in C(A)\cong\Ps(A)$.  Since $V_A$ has exponent
$2$, substituting $u=v$ in \eqref{eq:aut-cob} shows that
\[
\lambda(u)^2
=\frac{\beta_A(u,u)}{\beta_A(Tu,Tu)}\in\{\pm1\}.
\]
Thus $\lambda$ takes values in $\mu_4$.  Define
\begin{equation}\label{eq:theta-definition}
\Theta(T,\lambda)(zW_u)
\coloneqq z\lambda(u)W_{Tu},
\qquad z\in\mu_4.
\end{equation}
Condition~\eqref{eq:aut-cob} shows that the map defined in
\eqref{eq:theta-definition} preserves products and therefore defines
an automorphism of $P_n$ fixing $Y$ pointwise.

Conversely, every $\varphi\in\Aut_Y(P_n)$ induces a unique
$T\in\Sp(V_A)$ and has a unique expression
$\varphi(W_u)=\lambda(u)W_{Tu}$ with
$\lambda(u)\in\mu_4$.  Preservation of multiplication gives
\eqref{eq:aut-cob}, so $(T,\lambda)\in\Ps(A)$.  These two constructions
are inverse.  Composition on either side is governed by
\[
(T,\lambda)(S,\mu)=(TS,\lambda^S\mu),
\]
so $\Theta$ is a group isomorphism.

It preserves the symplectic component.  On the kernel, conjugation by
$W_v$ acts as
$W_u\mapsto\omega_A(v,u)W_u$, so the identification with inner
automorphisms is precisely the map $\kappa_A$ used in the Clifford
extension.  Hence the two squares determined by $\Theta$, the identity
map on $V_A$, and the identity map on $\Sp(V_A)$ commute, so the diagram
is an isomorphism of extensions.
\end{proof}
\subsection{The rank-one exception and nonsplitting for \texorpdfstring{$n \geq 2$}{n >= 2}}

Proposition~\ref{prop:ses-ident} transfers the splitting problem for
the Clifford extension to the automorphism
extension~\eqref{eq:autZ-ses}, to which Griess' nonsplitting results
apply.

\begin{theorem}\label{thm:elem-abelian-nonsplit}
Let $A = (\Z_2)^n$ with $n\ge 1$. The Clifford extension
\[
1 \longrightarrow V_A \longrightarrow C(A) \longrightarrow \Sp(V_A) \longrightarrow 1
\]
splits if and only if $n = 1$.
\end{theorem}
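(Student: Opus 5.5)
The plan is to treat the two directions separately. For $n\ge2$ we transfer the problem to Griess' automorphism extension via Proposition~\ref{prop:ses-ident}. For $n=1$ we construct a splitting by hand.

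\emph{Nonsplitting for $n\ge 2$.} By Proposition~\ref{prop:ses-ident}, the Clifford extension is isomorphic as an extension to \eqref{eq:autZ-ses}, with identity maps on $V_A$ and on $\Sp(V_A)$. An isomorphism of extensions carries splitting homomorphisms to splitting homomorphisms, so it suffices to show that \eqref{eq:autZ-ses} does not split. This is the content of Griess' analysis of $A(E_n\circ Y)$: in \cite{Griess1973} he shows that the extension of $\Sp(2n,\F_2)$ by $\Inn(E_n\circ\Z_4)\cong\F_2^{2n}$ is nonsplit for $n\ge2$.

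The main obstacle is matching Griess' hypotheses and conventions exactly. We must make sure that the sequence he proves nonsplit is the one for $\Aut_Y$ (automorphisms fixing $Y$ pointwise), and not the one for the full $\Aut(P_n)$. We must also check that his $n\ge2$ range contains no small exceptions, with $n=2$ the first case to verify. If the citation is unclear at $n=2$, my fallback is a direct argument modelled on Section~\ref{sec:nonsplit-cyclic}. I would pick a transvection $T$ of order $2$ in $\Sp(4,\F_2)$ and show that no lift $(T,\lambda)$ has order $2$ with the required compatibilities. Concretely, a lift of the transvection $u\mapsto u+\overline{\omega}_A(e,u)e$ has $\lambda$ taking value $\pm\mathrm{i}$ on some vectors, and $\lambda^T\lambda\neq\mathbf 1$ is forced once a commuting transvection is also lifted. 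Embedding $\Z_2^2$ via Theorem~\ref{thm:embedding} then handles larger $n$, since $(\Z_2)^n=(\Z_2)^2\oplus(\Z_2)^{n-2}$. This way nonsplitting for $n=2$ alone already suffices for all $n\ge2$.

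\emph{Splitting for $n=1$.} Here $\Sp(V_A)=\SL(2,\F_2)\cong\mathfrak S_3$, generated by $s$ and $t$ with relations $s^2=t^2=(st)^3=1$. I would use the lifts of Section~\ref{sec:nonsplit-cyclic} with $N=2$, namely \eqref{eq:lift-t} and \eqref{eq:lift-s}, and choose the parameters to satisfy these relations.
\begin{itemize}
\item Lemma~\ref{lem:parity-constraint} requires $x$ odd, so $x=1$; this is compatible with $2x\equiv0\pmod 2$.
\item The congruences obtained in the proof of Lemma~\ref{lem:modular-constraint} reduce to $z+w\equiv0$ over $\Z_2$. Note that $s^2=-I=I$ here.
\item The condition $\tilde s^2=(\id,\mathbf 1)$ becomes $(s^T+I)v_s=0$, which again says $z+w\equiv0$.
\end{itemize}
So $x=1$, $y$ arbitrary, and $z=w$ give lifts satisfying all defining relations. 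Since these relations present $\mathfrak S_3$, the assignment $s\mapsto\tilde s$, $t\mapsto\tilde t$ extends to a homomorphic section. Alternatively, a Sylow argument gives the same conclusion: $H^2(\mathfrak S_3,\F_2^2)$ injects into $H^2(\Z_2,\F_2^2)$ after restricting to a Sylow $2$-subgroup. That restricted class vanishes because $t$ admits an involutive lift, and $\F_2^2$ is the $2$-dimensional irreducible module, which is projective as an $\F_2\mathfrak S_3$-module. Hence $H^2(\mathfrak S_3,\F_2^2)=0$ outright, and the extension splits.
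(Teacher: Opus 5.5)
Your proposal is correct and follows the paper's proof. For $n=1$, the paper uses the member $(x,y)=(1,0)$, $(z,w)=(0,0)$ of your family of lifts and checks the Coxeter relations of $\mathfrak{S}_3$. For $n\ge2$, it transports Griess' result through Proposition~\ref{prop:ses-ident}, citing \cite[Corollary~2]{Griess1973} for $n\ge3$ and \cite[Corollary~3]{Griess1973} for $n=2$. Your observation that $n=2$ together with Theorem~\ref{thm:embedding} already covers every $n\ge2$ is a legitimate shortcut that needs only Corollary~3.

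Discard the transvection fallback, though, because it cannot work as sketched. Every transvection has an involutive lift: for example, $\iota(\tilde t)$ with $x=1$ lifts $t\oplus\id_{V_{\Z_2}}$, and all transvections are conjugate. Moreover, two distinct commuting transvections are conjugate, by Witt's theorem, to transvections acting on different $\Z_2$ factors. Such a pair admits commuting involutive lifts built from the $n=1$ splitting. So the obstruction is invisible on those subgroups.
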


\begin{proof}
For $n = 1$, we have
$\Sp(V_A)\cong\Sp(2,\F_2)\cong\mathfrak{S}_3$.  We give an explicit
section.  With $s,t$ as in \eqref{eq:cyclic-generators} for $N=2$, set
\[
\lambda_t(a,p)=\mathrm{i}^p(-1)^a,
\qquad
\lambda_s(a,p)=(-1)^{ap}.
\]
These are the lifts \eqref{eq:lift-t}--\eqref{eq:lift-s} with
$(x,y)=(1,0)$ and $(z,w)=(0,0)$.  Direct substitution in the twisted
product gives
\[
\widetilde t^2=\widetilde s^2
=(\widetilde s\widetilde t)^3=(\id,\mathbf{1}).
\]
Since
$\mathfrak{S}_3=\langle s,t\mid s^2=t^2=(st)^3=1\rangle$, these lifts
define a homomorphic section.

For $n \geq 2$, Griess' nonsplitting results apply to the automorphism
extension~\eqref{eq:autZ-ses} of $P_n=E_n\circ Y$.  The second sequence in
Griess' Corollary~2 states that
\[
1 \longrightarrow \Inn(P_n) \longrightarrow \Aut_Y(P_n) \longrightarrow \Sp(2n,\F_2) \longrightarrow 1
\]
is nonsplit for $n \geq 3$~\cite[Corollary~2,
p.~407]{Griess1973}.  Griess' Corollary~3 proves that these same exact
sequences are nonsplit for $n=2$~\cite[Corollary~3,
p.~408]{Griess1973}.  Proposition~\ref{prop:ses-ident} transports these
conclusions to the Clifford extension.
\end{proof}

\section{Proof of the splitting criterion}\label{sec:splitting-criterion}

It remains to combine the reduction to the $2$-primary part with
Theorems~\ref{thm:nonsplit-cyclic} and~
\ref{thm:elem-abelian-nonsplit}.

\begin{theorem}
Let $A$ be a finite abelian group and let $V_A=A\oplus \widehat A$.
The Clifford extension
\begin{equation}\label{eq:main-ext}
1 \longrightarrow V_A \longrightarrow C(A) \longrightarrow \Sp(V_A) \longrightarrow 1
\end{equation}
splits if and only if $4 \nmid |A|$.
\end{theorem}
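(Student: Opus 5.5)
The plan is to assemble the earlier results, reducing first to the $2$-primary component. Write $A=A_{\mathrm{odd}}\oplus A_2$. By Proposition~\ref{prop:coprime-decomp}, the extension~\eqref{eq:main-ext} splits if and only if the extensions for $A_{\mathrm{odd}}$ and $A_2$ both split. Proposition~\ref{prop:odd_case_vanish} gives splitting for $A_{\mathrm{odd}}$, which covers the trivial group as well. Hence \eqref{eq:main-ext} splits exactly when the extension for $A_2$ splits. Since $4\mid |A|$ if and only if $4\mid |A_2|$, it suffices to prove the following: for a finite abelian $2$-group $A_2$, the Clifford extension splits if and only if $A_2=1$ or $A_2\cong\Z_2$.

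For the ``if'' direction, the case $A_2=1$ is immediate, because then $V_{A_2}=0$ and $\Sp(V_{A_2})$ is trivial. The case $A_2\cong\Z_2$ is the case $n=1$ of Theorem~\ref{thm:elem-abelian-nonsplit}, where an explicit section was constructed.

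For the ``only if'' direction, assume $4\mid |A_2|$. Write
\[
A_2\cong \Z_{2^{k_1}}\oplus\cdots\oplus\Z_{2^{k_r}}, \qquad k_i\ge 1.
\]
Because $|A_2|\ge 4$, one of two cases holds.
\begin{enumerate}
\item Some $k_i\ge 2$. Then $A_2=\Z_{2^{k_i}}\oplus C$ for the complementary sum $C$. Theorem~\ref{thm:nonsplit-cyclic} shows that the extension for $\Z_{2^{k_i}}$ does not split.
\item All $k_i=1$ and $r\ge 2$. Then $A_2=(\Z_2)^r$ itself, and Theorem~\ref{thm:elem-abelian-nonsplit} gives nonsplitting directly, with no decomposition needed.
\end{enumerate}
In the first case, the contrapositive of Theorem~\ref{thm:embedding} (splitting for $B\oplus C$ forces splitting for $B$) transfers nonsplitting from the summand $\Z_{2^{k_i}}$ to $A_2$. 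One more application of Proposition~\ref{prop:coprime-decomp} then shows that the extension for $A$ does not split.

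There is no substantive obstacle, because all the difficult inputs have already been proved. The only care needed is bookkeeping. The case split must be exhaustive: if every $k_i=1$, then $4\mid|A_2|$ forces $r\ge2$. Also, Theorem~\ref{thm:embedding} is applied with an arbitrary complement $C$, possibly zero, which that theorem allows since it imposes no coprimality condition. Finally, we use that the primary decomposition, and hence the splitting question, does not depend on the chosen cyclic decomposition, since the Clifford extension is defined intrinsically from $A$.
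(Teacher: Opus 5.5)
Your proposal is correct and follows the paper's proof essentially step for step. You reduce to $A_2$ via Proposition~\ref{prop:coprime-decomp} and the odd-order splitting, and take the $\Z_2$ splitting from the $n=1$ case. Nonsplitting then comes from the $\Z_{2^k}$ theorem combined with Theorem~\ref{thm:embedding}, or from the elementary abelian theorem. The only cosmetic difference is that you apply Theorem~\ref{thm:elem-abelian-nonsplit} directly to $(\Z_2)^r$, whereas the paper applies it to the summand $(\Z_2)^2$ and then invokes Theorem~\ref{thm:embedding}; both are valid.
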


\begin{proof}
Write the primary decomposition
\[
A \;\cong\; A_{\mathrm{odd}}\oplus A_2,
\]
where $|A_{\mathrm{odd}}|$ is odd and $|A_2|$ is a power of $2$.
By Proposition~\ref{prop:coprime-decomp}, the Clifford extension for
$A$ is the product of the extensions for $A_{\mathrm{odd}}$ and $A_2$.
The extension for $A_{\mathrm{odd}}$ splits by
Proposition~\ref{prop:odd_case_vanish}; hence
\eqref{eq:main-ext} splits if and only if the $2$-primary extension for
$A_2$ splits.
Thus it suffices to determine when the extension splits for $A_2$.

\medskip
\noindent\textit{Step 1: the splitting cases.}
If $A_2=1$, then $A$ has odd order and the extension splits by Proposition~\ref{prop:odd_case_vanish}.
If $A_2\cong \Z_2$, then the extension splits by Theorem~\ref{thm:elem-abelian-nonsplit}, which treats the exceptional rank-one elementary abelian case.

\medskip
\noindent\textit{Step 2: the nonsplitting cases.}
Assume $|A_2| \geq 4$.
Write
\[
A_2 \;\cong\; \Z_{2^{k_1}}\oplus \cdots \oplus \Z_{2^{k_m}},
\qquad k_1\ge \cdots \ge k_m \ge 1.
\]
There are then exactly two possibilities:

\smallskip
\noindent\emph{(A) $k_1\ge2$.}
Then $\Z_{2^{k_1}}$ is a direct summand of $A_2$ of order at least
$4$.
By Theorem~\ref{thm:nonsplit-cyclic}, the extension for $\Z_{2^{k_1}}$ does not split.
By Theorem~\ref{thm:embedding}, nonsplitting extends to any group containing
$\Z_{2^{k_1}}$ as a direct summand; therefore $[\cO_{A_2}]\neq 0$.

\smallskip
\noindent\emph{(B) $k_1=1$.}
Then $k_i=1$ for all $i$, and $|A_2|\ge4$ implies $m\ge2$. Thus
$A_2\cong(\Z_2)^m$, and $(\Z_2)^2$ is a direct summand of $A_2$.
By Theorem~\ref{thm:elem-abelian-nonsplit}, the extension for $(\Z_2)^2$ does not split,
hence $[\cO_{(\Z_2)^2}]\neq 0$.
Applying Theorem~\ref{thm:embedding} yields $[\cO_{A_2}]\neq 0$ for all $m\ge2$.

\medskip
Thus \eqref{eq:main-ext} splits exactly when $A_2=1$ or
$A_2\cong\Z_2$, and these are the cases in which $4\nmid|A|$.
\end{proof}

\section*{Acknowledgments}
The author thanks both referees for their careful reading and constructive comments.
The author was partially supported by Grant INV-2025-213-3452 from the School of Science of Universidad de los Andes.

\section*{Conflict of interest}
The author declares that he has no conflict of interest.

\bibliographystyle{abbrvurl}
\bibliography{260516-Galindo}

@article{BravyiKitaev2005,
  author  = {Bravyi, Sergey and Kitaev, Alexei},
  title   = {Universal quantum computation with ideal {Clifford} gates and noisy ancillas},
  journal = {Phys Rev A},
  volume  = {71},
  number  = {2},
  pages   = {022316},
  year    = {2005},
  doi     = {10.1103/PhysRevA.71.022316},
}

@article{KT23,
  title = {{Clifford} group is not a semidirect product in dimensions {$N$} divisible by four},
  author = {Korbel{\'a}{\v{r}}, Miroslav and Tolar, Ji{\v{r}}{\'i}},
  journal = {J Phys A Math Theor},
  year = {2023},
  volume = {56},
  number = {27},
  pages = {275304},
  doi = {10.1088/1751-8121/acd891}
}

@article{Griess1973,
  title = {Automorphisms of extra special groups and nonvanishing degree 2 cohomology},
  author = {Griess, Jr., Robert L.},
  journal = {Pac J Math},
  volume = {48},
  number = {2},
  pages = {403--422},
  year = {1973},
  doi = {10.2140/pjm.1973.48.403}
}

@article{Weil1964,
  title = {Sur certains groupes d'opérateurs unitaires},
  author = {Weil, André},
  journal = {Acta Math},
  volume = {111},
  pages = {143--211},
  year = {1964},
  doi = {10.1007/BF02391012}
}

@book{Brown1982,
  title = {Cohomology of Groups},
  author = {Brown, Kenneth S.},
  series = {Graduate Texts in Mathematics},
  volume = {87},
  year = {1982},
  publisher = {Springer-Verlag},
  address = {New York},
  doi = {10.1007/978-1-4684-9327-6}
}

@phdthesis{Gottesman1997,
  author = {Gottesman, Daniel},
  title = {Stabilizer codes and quantum error correction},
  school = {California Institute of Technology},
  address = {Pasadena},
  year = {1997},
  note = {arXiv:quant-ph/9705052},
  doi = {10.7907/RZR7-DT72}
}

@article{Gottesman1998,
  title = {Theory of fault-tolerant quantum computation},
  author = {Gottesman, Daniel},
  journal = {Phys Rev A},
  volume = {57},
  number = {1},
  pages = {127--137},
  year = {1998},
  doi = {10.1103/PhysRevA.57.127}
}

@article{AaronsonGottesman2004,
  author = {Aaronson, Scott and Gottesman, Daniel},
  title = {Improved simulation of stabilizer circuits},
  journal = {Phys Rev A},
  volume = {70},
  number = {5},
  pages = {052328},
  year = {2004},
  doi = {10.1103/PhysRevA.70.052328}
}

@article{Appleby2005,
  author = {Appleby, D. M.},
  title = {Symmetric informationally complete--positive operator valued measures and the extended {Clifford} group},
  journal = {J Math Phys},
  volume = {46},
  number = {5},
  pages = {052107},
  year = {2005},
  doi = {10.1063/1.1896384}
}

@article{Gross2006,
  author = {Gross, David},
  title = {Hudson's theorem for finite-dimensional quantum systems},
  journal = {J Math Phys},
  volume = {47},
  number = {12},
  pages = {122107},
  year = {2006},
  doi = {10.1063/1.2393152}
}

@article{Hostens2005,
  author = {Hostens, Erik and Dehaene, Jeroen and De Moor, Bart},
  title = {Stabilizer states and {Clifford} operations for systems of arbitrary dimensions and modular arithmetic},
  journal = {Phys Rev A},
  volume = {71},
  number = {4},
  pages = {042315},
  year = {2005},
  doi = {10.1103/PhysRevA.71.042315}
}

@article{MosesEtAl2026,
  author = {Moses, Milo and Horecki, Jacek and Deka, Konrad and Tu{\l}owiecki, Jan},
  title = {{Clifford} circuits on qudits of mixed dimension},
  journal = {Quantum Inf Comput},
  volume = {26},
  number = {1},
  pages = {180--191},
  year = {2026},
  doi = {10.2478/qic-2026-0009}
}

@article{Knill2008,
  author = {Knill, E. and Leibfried, D. and Reichle, R. and Britton, J. and Blakestad, R. B. and Jost, J. D. and Langer, C. and Ozeri, R. and Seidelin, S. and Wineland, D. J.},
  title = {Randomized benchmarking of quantum gates},
  journal = {Phys Rev A},
  volume = {77},
  number = {1},
  pages = {012307},
  year = {2008},
  doi = {10.1103/PhysRevA.77.012307}
}

@article{Dankert2009,
  author = {Dankert, Christoph and Cleve, Richard and Emerson, Joseph and Livine, Etera},
  title = {Exact and approximate unitary 2-designs and their application to fidelity estimation},
  journal = {Phys Rev A},
  volume = {80},
  number = {1},
  pages = {012304},
  year = {2009},
  doi = {10.1103/PhysRevA.80.012304}
}

@article{Magesan2011,
  author = {Magesan, Easwar and Gambetta, J. M. and Emerson, Joseph},
  title = {Scalable and robust randomized benchmarking of quantum processes},
  journal = {Phys Rev Lett},
  volume = {106},
  number = {18},
  pages = {180504},
  year = {2011},
  doi = {10.1103/PhysRevLett.106.180504}
}

@article{Tambara2000,
  title = {Representations of tensor categories with fusion rules of self-duality for abelian groups},
  author = {Tambara, Daisuke},
  journal = {Isr J Math},
  volume = {118},
  number = {1},
  pages = {29--60},
  year = {2000},
  doi = {10.1007/BF02803515}
}

@article{Mackey1949,
  author = {Mackey, George W.},
  title = {A theorem of {Stone} and von {Neumann}},
  journal = {Duke Math J},
  volume = {16},
  number = {2},
  pages = {313--326},
  year = {1949},
  doi = {10.1215/S0012-7094-49-01631-2}
}

@article{Prasad2011,
  author = {Prasad, Amritanshu},
  title = {An easy proof of the {Stone--von Neumann--Mackey} theorem},
  journal = {Expo Math},
  volume = {29},
  number = {1},
  pages = {110--118},
  year = {2011},
  doi = {10.1016/j.exmath.2010.06.001}
}

@article{KaiblingerNeuhauser2009,
  author = {Kaiblinger, Norbert and Neuhauser, Markus},
  title = {Metaplectic operators for finite abelian groups and {$\mathbb{R}^{d}$}},
  journal = {Indag Math},
  volume = {20},
  number = {2},
  pages = {233--246},
  year = {2009},
  doi = {10.1016/S0019-3577(09)80011-6}
}

@article{DuttaPrasad2015,
  author = {Dutta, Kunal and Prasad, Amritanshu},
  title = {Combinatorics of finite abelian groups and {Weil} representations},
  journal = {Pac J Math},
  volume = {275},
  number = {2},
  pages = {295--324},
  year = {2015},
  doi = {10.2140/pjm.2015.275.295}
}

@article{Galindo2017,
  author = {Galindo, C\'esar},
  title = {Isocategorical groups and their {Weil} representations},
  journal = {Trans Amer Math Soc},
  volume = {369},
  number = {11},
  pages = {7935--7960},
  year = {2017},
  doi = {10.1090/tran/6919}
}

@article{Mackey1958,
  author = {Mackey, George W.},
  title = {Unitary representations of group extensions. {I}},
  journal = {Acta Math},
  volume = {99},
  pages = {265--311},
  year = {1958},
  doi = {10.1007/BF02392428}
}

@article{BoltRoomWall1961a,
  author = {Bolt, Beverley and Room, T. G. and Wall, G. E.},
  title = {On the {Clifford} collineation, transform and similarity groups. {I}},
  journal = {J Aust Math Soc},
  volume = {2},
  number = {1},
  pages = {60--79},
  year = {1961},
  doi = {10.1017/S1446788700026379}
}

@article{BoltRoomWall1961b,
  author = {Bolt, Beverley and Room, T. G. and Wall, G. E.},
  title = {On the {Clifford} collineation, transform and similarity groups. {II}},
  journal = {J Aust Math Soc},
  volume = {2},
  number = {1},
  pages = {80--96},
  year = {1961},
  doi = {10.1017/S1446788700026380}
}

@misc{KorbelarTolar2026,
  author = {Korbel{\'a}{\v{r}}, Miroslav and Tolar, Ji{\v{r}}{\'i}},
  title = {Structure of {Clifford} groups of composite finite quantum systems},
  year = {2026},
  note = {arXiv:2606.08215 [math-ph]}
}
\end{document}